\documentclass[12pt,reqno]{amsart}
 
\usepackage{amsmath,amsthm,amssymb,mathrsfs,amscd,epsfig,color}
\usepackage{url}
\usepackage{graphicx}
\usepackage{mathrsfs}
\usepackage{ulem}
\usepackage{fancybox}
\usepackage{pb-diagram} 
\usepackage[all]{xy}
\usepackage{comment}
\usepackage{mathtools}
\usepackage{centernot}
\usepackage{placeins} % for \FloatBarrier 
\usepackage{url}

\makeatletter

\makeatletter
    \newcommand\contFrac{\@ifstar{\@contFracStar}{\@contFracNoStar}}

   \def\singleContFrac#1#2{%
        \begin{array}{@{}c@{}}%
            \multicolumn{1}{c|}{#1}%
            \\%
            \hline%
           \multicolumn{1}{|c}{#2}%
        \end{array}%
   }

    \def\@contFracNoStar#1{%
        \mathchoice{% * Display style
            \@contFracNoStarDisplay@#1//\@nil%
        }{%           * Text style
            \@contFracNoStarInline@#1//\@nil%
        }{%           * Script style
            \@contFracNoStarInline@#1//\@nil%
        }{%           * Script script style
            \@contFracNoStarInline@#1//\@nil%
        }%
    }

    \def\@contFracNoStarDisplay@#1//#2\@nil{%
        \@ifmtarg{#2}{%
            #1%
        }{%
            #1+\cfrac{1}{\@contFracNoStarDisplay@#2\@nil}%
        }%
    }

        \def\@contFracNoStarInline@#1//#2\@nil{%
            \@ifmtarg{#2}{%
                #1%
            }{%
                #1 \@@contFracNoStarInline@@#2\@nil%
            }%
        }
        \def\@@contFracNoStarInline@@#1//#2\@nil{%
            \@ifmtarg{#2}{%
                + \singleContFrac{1}{#1}%
            }{%
                + \singleContFrac{1}{#1} \@@contFracNoStarInline@@#2\@nil%
            }%
        }

    \def\@contFracStar#1{%
        \mathchoice{% * Display style
            \@contFracStarDisplay@#1////\@nil%
        }{%           * Text style
            \@contFracStarInline@#1//\@nil%
        }{%           * Script style
            \@contFracStarInline@#1//\@nil%
        }{%           * Script script style
            \@contFracStarInline@#1//\@nil%
        }%
    }

    \def\@contFracStarDisplay@#1//#2//#3\@nil{%
        \@ifmtarg{#2}{%
            #1%
        }{%
            #1 + \cfrac{#2}{\@contFracStarDisplay@#3\@nil}%
        }%
    }

        \def\@contFracStarInline@#1//#2\@nil{%
            \@ifmtarg{#2}{%
                #1%
            }{%
                #1 \@@contFracStarInline@@#2\@nil%
            }%
        }
        \def\@@contFracStarInline@@#1//#2//#3\@nil{%
            \@ifmtarg{#3}{%
                + \singleContFrac{#1}{#2}%
            }{%
                + \singleContFrac{#1}{#2} \@@contFracStarInline@@#3\@nil%
            }%
        }
\makeatother

\numberwithin{equation}{section}
\theoremstyle{plain}

\newtheorem*{thmA}{Theorem A}
\newtheorem*{thmB}{Theorem B}

\newtheorem{thm}{Theorem}[section]
\newtheorem{lem}[thm]{Lemma}

\newtheorem{pro}[thm]{Proposition}
\theoremstyle{definition}
\newtheorem{df}[thm]{Definition}

\newtheorem{rem}[thm]{Remark}

\newtheorem*{prf*}{Proof}
\newtheorem*{cla*}{Claim}

\newtheorem*{pf*}{}
\newtheorem*{lem*}{LemmaA}
\newtheorem*{lm*}{LemmaB}
\makeatletter
\@namedef{subjclassname@2020}{%
  \textup{2020} Mathematics Subject Classification}
\makeatother
\title[]
{Exact dimensionality of projected measures \\ for expanding rational semigroups}
\author{Yuto Nakajima }

\address{Faculty of Science and Engineering, Doshisha University, Kyoto, 610-0394, JAPAN}
\email{yunakaji@mail.doshisha.ac.jp}

\subjclass[2020]{}
\thanks{{\it Keywords}: 
Exact dimensionality; Ledrappier--Young formula; projection entropy;  rational semigroups}
\begin{document}

\begin{abstract} 
We study the dimension theory of expanding rational semigroups. An expanding rational semigroup is naturally described by a skew product on the product of the symbolic space and the Riemann sphere. For an invariant Borel probability measure of this skew product, we consider its disintegration over a symbolic factor and study the push-forwards of the conditional measures under the second coordinate projection.
We prove a Ledrappier--Young type formula for the projected conditional measures. In particular, if the original invariant measure is ergodic, then these projected conditional measures are exact dimensional for almost every fiber. Applying this result to the trivial factor yields exact dimensionality of the projected invariant measure. This result can be viewed as a backward analogue of the result of [D.-J. Feng and H. Hu. \textit{Comm. Pure Appl. Math.} \textbf{62} (2009), no. 11, 1435--1500].
\end{abstract}

\maketitle

\section{Introduction}\label{intro}
An important problem in dynamical systems is to understand how fundamental invariants, such as entropy, Lyapunov exponents, and dimension, are related to each other. 
One way to approach this problem is to study the local scaling behaviour of invariant measures. 
Let $\mu$ be a Borel probability measure on a metric space $(X,d)$. We say that $\mu$ is {\it exact dimensional} if there exists a constant $D\geq 0$ such that
\begin{equation}
\label{exactdimension}
\lim_{r\to 0}\frac{\log \mu(B(x,r))}{\log r}=D
\quad\text{for }\mu\text{-a.e. }x\in X,
\end{equation}
where $B(x,r)=\{y\in X\colon d(x,y)<r\}.$
In this case, we write $\dim\mu=D$ and call this number the dimension of $\mu$. This dimension agrees with several other standard notions of dimension; see \cite[Section~10]{Fal} and \cite[Section~4]{Y82} .

Young~\cite{Y82} proved that every hyperbolic ergodic invariant measure of a smooth diffeomorphism on a compact Riemannian surface is exact dimensional, and related its dimension to entropy and Lyapunov exponents. Ledrappier and Young~\cite{LYI85,LYII85} further developed these ideas in higher dimensions.
In particular, they studied conditional measures along stable and unstable manifolds and proved their exact dimensionality. Later, Barreira, Pesin and Schmeling~\cite{BPS} showed that every hyperbolic ergodic invariant measure of a smooth diffeomorphism is exact dimensional.

Beyond the smooth setting, exact dimensionality has been investigated in many classes of non-smooth and non-invertible dynamical systems. A particularly important class is given by iterated function systems (IFSs), that is, finite collections of strictly contractive maps on complete metric spaces. In this direction, several developments have been obtained for conformal IFSs, random IFSs, self-affine IFSs, and Furstenberg measures; see \cite{BK17,Fen,FH,HS,MiUr16,Rap}.

Among these works, the result of Feng and Hu~\cite{FH} is especially relevant to this paper. They proved that the push-forward of an ergodic shift-invariant Borel probability measure on the symbolic space under the canonical projection map of an IFS is exact dimensional. A main difficulty is that the projection map is generally not one-to-one. Hence, the usual metric entropy of the shift map generally does not give the correct entropy term for the dimension of the projected measure. To deal with this loss of information under projection, Feng and Hu introduced the projection entropy (see \cite[Definition 2.1]{FH}).

The purpose of this paper is to extend the idea of Feng and Hu~\cite{FH} to expanding rational semigroups. A finitely generated rational semigroup is naturally described by a skew product, and the Julia set of the semigroup is obtained as the projection of the Julia set of this skew product. Projecting an invariant measure for the skew product gives a natural measure on the Julia set of the semigroup. Moreover, we consider the disintegration of an invariant measure over a symbolic factor and study the projected conditional measures.

As in the setting of IFSs, the projection is generally not one-to-one. Hence, the usual metric entropy of the skew product does not directly describe the local scaling behaviour of the projected measure.
Moreover, there is an additional difficulty in the rational semigroup setting.
Unlike the branch maps of an IFS, rational maps are not globally injective. To overcome this, we use partitions adapted to local inverse branches and define a corresponding projection entropy (see Definition~\ref{projent2}). Our main theorem gives a Ledrappier--Young type formula for the local dimensions of the projected conditional measures (see Theorem A). In particular, these projected conditional measures are exact dimensional for almost every fiber if the original invariant measure is ergodic.

\subsection{Rational semigroups}
The study of the dynamics of rational semigroups was initiated independently and around the same time by Hinkkanen and Martin~\cite{HM} and by Sumi~\cite{Sumi1}. We give a rigorous setting.

Let $\widehat{\mathbb C}$ denote the Riemann sphere. Let ${\rm Rat}$ denote  the set of all non-constant rational maps on $\widehat{\mathbb C}$ endowed with a metric $\overline{d}$ defined by 
\[\overline{d}(h_1, h_2)=\sup_{x\in \widehat{\mathbb C}}d_{\widehat{\mathbb C}}(h_1(x), h_2(x)),\]
where $d_{\widehat{\mathbb C}}$ is the spherical metric on $\widehat{\mathbb C}$. 
A rational semigroup is a subsemigroup of $\mathrm{Rat}$, where the semigroup operation is composition. 
If the generators are $f_1,f_2,\ldots$, we write $G=\langle f_1,f_2,\ldots\rangle.$
As in the case of a single rational map \cite{Mil}, the Julia set of a rational semigroup $G$ is defined by
\[J(G):=\left\{z\in\widehat{\mathbb C}\colon G \text{ is not normal on any neighbourhood of } z \right\}.\]
Let $m\in\mathbb N$, and let 
\[\Sigma_m:=\{1,\ldots,m\}^{\mathbb Z}.\] 
We denote by $\sigma:\Sigma_m\to\Sigma_m$ the shift map.
For $(f_1,\ldots,f_m)\in ({\rm Rat})^m,$ define the associated skew product
$\tilde f:\Sigma_m\times\widehat{\mathbb C}\to\Sigma_m\times\widehat{\mathbb C}$  by
\[\tilde f(\omega,z)= (\sigma\omega, f_{\omega_1}(z)) \ \text{for}\ 
\omega=(\ldots, \omega_{0},\omega_{1},\omega_2,\ldots)\in\Sigma_m.\]
For each $\omega\in\Sigma_m$, the fiberwise Julia set is defined by
\[J_\omega(\tilde f):=\left\{z\in\widehat{\mathbb C}\colon \{f_{\omega_n}\circ\cdots\circ f_{\omega_1}\}_{n\ge1} \text{ is not normal on any neighbourhood of }z \right\}.\]
The Julia set of the skew product is defined by
\[J(\tilde f):=\overline{\bigcup_{\omega\in\Sigma_m}\{\omega\}\times J_\omega(\tilde f)},\]
where the closure is taken in $\Sigma_m\times\widehat{\mathbb C}$. 
Let  $\Pi:\Sigma_m\times\widehat{\mathbb C}\to\widehat{\mathbb C}$ be the second coordinate projection. 
\begin{rem}
Let $G=\langle f_1,\ldots,f_m\rangle$ be a finitely generated rational semigroup, and let $\tilde f:\Sigma_m\times\widehat{\mathbb C} \to \Sigma_m\times\widehat{\mathbb C}$ be the skew product associated with $(f_1,\ldots,f_m).$
By \cite[Remark~2.3]{SU}, $J(\tilde f)$ is completely invariant under $\tilde f$, that is, 
\[\tilde f (J(\tilde f))=\tilde f^{-1} (J(\tilde f ))=J(\tilde f),\] and the second coordinate projection satisfies
\begin{equation}
\label{relation}
  \Pi(J(\tilde f))=J(G).
\end{equation}
Hence, $\Pi$ serves as the natural projection from the Julia set of the skew product to the Julia set of the rational semigroup.

By \cite[Lemma~0.2]{S3},
the Julia set $J(G)$ satisfies the backward self-similarity relation
\[ J(G)=\bigcup_{i=1}^m f_i^{-1}(J(G)),
\]
which is analogous to the self-similarity of limit sets of IFSs.

\end{rem}
For $z\in\widehat{\mathbb C}$, let $T_z\widehat{\mathbb C}$ be the complex tangent space at $z$. For a holomorphic map $\varphi$ on an open subset $V\subset \widehat{\mathbb C}$, we write
\[D\varphi_z:T_z\widehat{\mathbb C}\to T_{\varphi(z)}\widehat{\mathbb C}\]
for its derivative at $z$. The norm $\|\varphi'(z)\|$ denotes the operator norm of $D\varphi_z$ with respect to the spherical metric on $\widehat{\mathbb C}$. 

For $n\in\mathbb N$ and $(\omega, z)\in \Sigma_m\times \widehat{\mathbb C}$, define
\[\left\|(\tilde f^n)'(\omega,z)\right\|:=\left\|(f_{\omega_n}\circ\cdots\circ f_{\omega_1})'(z)\right\|.\]
\begin{df}\label{exprat}
Let $G=\langle f_1,\ldots,f_m\rangle$ be a finitely generated rational semigroup, and let $\tilde f:\Sigma_m\times\widehat{\mathbb C} \to \Sigma_m\times\widehat{\mathbb C}$ be the skew product associated with $(f_1,\ldots,f_m).$
We say that $G$ is expanding if $J(G)\neq\emptyset$ and if $\tilde f$ is expanding along the fibers of $J(\tilde f)$. That is, there exist constants $C>0$ and $\lambda>1$ such that for every $n\geq 1$, \[\inf_{(\omega,z)\in J(\tilde f)}\left\|(\tilde f^n)'(\omega,z)\right\|\geq C\lambda^n.\]
\end{df}
The dimension theory of expanding rational semigroups has been extensively developed. For example, under suitable separation conditions, Sumi~\cite{S1} proved that the Hausdorff dimension of the Julia set is equal to the unique zero of the pressure function associated with the semigroup. In the more complicated overlapping case, Sumi and Urba\'nski~\cite{SU} developed a transversality argument for parametrized families of expanding rational semigroups and obtained Bowen's formula for the Julia sets of such semigroups for almost every parameter value. Jaerisch and Sumi \cite{JS15} established the multifractal formalism for expanding rational semigroups.
\subsection{Measure-theoretic entropy}
Entropy will play a central role in our arguments. We recall some basic notation (see, e.g., \cite{Do, Mane, PU,Wal} for details).
Let $(\Omega,\mathcal F,\nu)$ be a probability space. For a sub-$\sigma$-algebra $\mathcal A\subset\mathcal F$ and $g\in L^1(\Omega,\mathcal F,\nu)$, we denote by $E_{\nu}(g\mid\mathcal A)$ the conditional expectation of $g$ with respect to $\mathcal A$.
For a countable measurable partition $\xi$ of $\Omega$, define the conditional information of $\xi$ given $\mathcal A$ by
\[I_{\nu}(\xi\mid\mathcal A):= -\sum_{A\in\xi} \mathbf 1_A\log E_{\nu}(\mathbf 1_A\mid\mathcal A),\]
where $\mathbf 1_A$ is the indicator function of $A.$
The conditional entropy of $\xi$ given $\mathcal A$ is defined by
\[H_{\nu}(\xi\mid\mathcal A):=\int_{\Omega} I_{\nu}(\xi\mid\mathcal A)\,d\nu.\]
%If $\mathcal A$ is the trivial $\sigma$-algebra $\{\emptyset, \Omega\}$, we simply write
%\[I_{\nu}(\xi) :=
%I_{\nu}(\xi\mid\{\emptyset, \Omega\}), \  H_{\nu}(\xi) :=H_{\nu}(\xi\mid\{\emptyset, \Omega\}).\]
For a topological space $\Delta,$ let $\mathcal B(\Delta)$ denote the Borel $\sigma$-algebra on $\Delta.$
Let 
\[\mathcal I:=\{B\in\mathcal B(J(\tilde f))\colon \tilde f^{-1}B=B\}\]
be the invariant $\sigma$-algebra on $J(\tilde f)$. We also write $\gamma:=\mathcal B(\widehat{\mathbb C}).$

\subsection{Statements of main results}
In this section, we state the main results of this paper. Let $G=\langle f_1,\ldots,f_m\rangle$ be an expanding rational semigroup, and let $\tilde f:J(\tilde f)\to J(\tilde f)$ be the skew product associated with $(f_1,\ldots,f_m).$ 

To treat projected invariant measures and projected conditional measures in a unified manner, we introduce a factor of the skew product. 
Let $p: J(\tilde f)\rightarrow \Sigma_m$ be a measurable map such that
\begin{equation}
\label{commute}
\sigma \circ p=p\circ \tilde f.
\end{equation}
For two sub-$\sigma$-algebras $\mathcal A_1$ and $\mathcal A_2$, we denote by $\mathcal A_1\vee\mathcal A_2$ the smallest $\sigma$-algebra containing both $\mathcal A_1$ and $\mathcal A_2$.
Following Feng and Hu~\cite{FH} and  Mihailescu and  Urba\'nski~\cite{MiUr16}, we introduce the notion of {\it conditional projection entropy}.

\begin{df}\label{projent2}
Let $\mathcal R$ be a finite Borel partition of $J(\tilde{f})$. Let $\mu$ be an $\tilde{f}$-invariant Borel probability measure on $J(\tilde{f})$. Define
\[h_{\Pi, p}(\tilde{f},\mu, \mathcal R):= H_\mu(\mathcal R\mid  p^{-1}\mathcal B({\Sigma_m})\vee \tilde{f}^{-1}\Pi^{-1}\gamma)-
H_\mu(\mathcal R\mid p^{-1}\mathcal B({\Sigma_m})\vee\Pi^{-1}\gamma).\]
This is called the {\it conditional projection entropy of $\mu$ under the projection $\Pi$ with respect to the partition $\mathcal R$ over $p$}.
For $x\in J(\tilde f)$ we also define 
\begin{align*}&h_{\Pi, p}(\tilde{f},\mu, x, \mathcal R)\\&:=
E_{\mu}\left(I_\mu(\mathcal R\mid p^{-1}\mathcal B({\Sigma_m})\vee\tilde{f}^{-1}\Pi^{-1}\gamma)-
I_\mu(\mathcal R\mid p^{-1}\mathcal B({\Sigma_m})\vee\Pi^{-1}\gamma)\mid \mathcal I\right)(x).\end{align*}
This is called the {\it conditional local projection entropy of $\mu$ at $x$ under the projection $\Pi$ with respect to the partition $\mathcal R$ over $p$}.\end{df}

Note that 
\[h_{\Pi, p}(\tilde{f},\mu, \mathcal R)=\int_{J(\tilde f)} h_{\Pi, p}(\tilde{f},\mu, x, \mathcal R)\ d\mu(x).\]
For $j\in \{1,\ldots, m\}$ set
\[[j]:=\{\omega=(\omega_i)_{i=-\infty}^{\infty}\in \Sigma_m\colon \omega_1=j\}.\]
In the case of conformal IFSs, we use the one-cylinder partition of the symbolic space (see \cite[Definition 2.1]{FH}). 
However, in the present setting a rational map is not globally injective in $\widehat{\mathbb C}$. 
Hence, we introduce condition (C$\Pi$) for a finite Borel partition $\mathcal R$ of $J(\tilde f)$ as follows.
\begin{itemize}
\item [(C$\Pi$):] For any $R\in \mathcal R$ there exist $j\in \{1,..., m\}$, a compact set $K_R\subset \widehat{\mathbb C},$ and an open set $U_R\subset \widehat{\mathbb C}$ such that 
\begin{itemize}
\item $R\subset J(\tilde f)\cap ([j]\times K_R);$ 
\item $K_R\subset U_R;$
\item $f_{j}$ is injective on $U_R.$
\end{itemize}
\end{itemize}
We call such a partition {\it $\Pi$-adapted} for $\tilde f$.
Intuitively, if a finite Borel partition $\mathcal R$ of $J(\tilde{f})$ is $\Pi$-adapted, then $\mathcal R$ records both the current symbol $\omega_1$ and a local inverse branch of the corresponding rational map. The existence of such partitions is a simple consequence of expansion and compactness (see Proposition~\ref{generate}). 

The following proposition shows that this definition is independent of the auxiliary partition satisfying (C$\Pi$).
\begin{pro}
\label{noproblem}
If $\mathcal R_1$ and $\mathcal R_2$ are $\Pi$-adapted, then for $\mu$-a.e. $x\in J(\tilde f),$
\[h_{\Pi, p}(\tilde f,\mu, x, \mathcal R_1)=h_{\Pi, p}(\tilde f,\mu, x, \mathcal R_2).\]
In particular, 
\[h_{\Pi, p}(\tilde f,\mu, \mathcal R_1)=h_{\Pi, p}(\tilde f,\mu, \mathcal R_2).\]
\end{pro}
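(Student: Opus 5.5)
The plan is to compare both partitions with their common refinement $\mathcal R:=\mathcal R_1\vee\mathcal R_2$. Since $\mathcal R$ refines $\mathcal R_1$, each $R\in\mathcal R$ is contained in some $R_1\in\mathcal R_1$, and we may take $K_R:=K_{R_1}$, $U_R:=U_{R_1}$ with the same symbol $j$. Hence $\mathcal R$ is again $\Pi$-adapted, and it suffices to prove the claim when $\mathcal R_2$ refines $\mathcal R_1$. Write $\mathcal A:=p^{-1}\mathcal B(\Sigma_m)\vee\tilde f^{-1}\Pi^{-1}\gamma$ and $\mathcal A':=p^{-1}\mathcal B(\Sigma_m)\vee\Pi^{-1}\gamma$. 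By the chain rule for conditional information,
\[
I_\mu(\mathcal R_2\mid\mathcal A)=I_\mu(\mathcal R_1\mid\mathcal A)+I_\mu(\mathcal R_2\mid\mathcal R_1\vee\mathcal A),
\]
and the analogous identity holds with $\mathcal A'$ in place of $\mathcal A$. Taking the difference and then applying $E_\mu(\cdot\mid\mathcal I)$, we see that it suffices to show
\[
I_\mu(\mathcal R_2\mid\mathcal R_1\vee\mathcal A)=I_\mu(\mathcal R_2\mid\mathcal R_1\vee\mathcal A')=0\quad\mu\text{-a.e.}
\]
In fact it is enough to show that $\mathcal R_2\subset\mathcal R_1\vee\mathcal A'$ and $\mathcal R_2\subset\mathcal R_1\vee\mathcal A$ modulo $\mu$-null sets. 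Both informations are then $0$, since conditional information vanishes when the partition is measurable with respect to the conditioning $\sigma$-algebra.

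The first inclusion is the easy one. Let $x=(\omega,z)\in R_1\cap R_2$. The symbol $\omega_1$ is fixed by $R_1$, since (C$\Pi$) gives $R_1\subset[j]\times K_{R_1}$. It therefore remains to show that $\omega_1$ and $z$ determine the $\mathcal R_2$-element inside $R_1$, but a partition element of $\mathcal R_2$ may depend on the whole of $\omega$. So, more carefully, the point is that $\mathcal R_1\vee\mathcal A'$ need not contain $\mathcal R_2$ exactly. I would therefore instead use the symmetric comparison: prove that the quantity $I(\mathcal R\mid\mathcal A)-I(\mathcal R\mid\mathcal A')$ is unchanged when $\mathcal R$ is replaced by any $\Pi$-adapted refinement. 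By the chain rule this difference equals $I_\mu(\mathcal R_2\mid\mathcal R_1\vee\mathcal A)-I_\mu(\mathcal R_2\mid\mathcal R_1\vee\mathcal A')$, and the strategy is to show that $\mathcal R_1\vee\mathcal A=\mathcal R_1\vee\mathcal A'$ modulo $\mu$. Then the two conditional informations coincide, and the difference is zero.

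The key step, and the main obstacle, is this identity of $\sigma$-algebras. The inclusion $\mathcal A\subset\mathcal A'\vee\mathcal R_1$ is clear. On an element $R_1$ with symbol $j$ we have $\Pi\circ\tilde f=f_j\circ\Pi$, so $\tilde f^{-1}\Pi^{-1}B\cap R_1=\Pi^{-1}(f_j^{-1}B)\cap R_1$, which lies in $\mathcal A'\vee\mathcal R_1$. For the converse inclusion, let $B\in\gamma$. On $R_1$ we have $\Pi(x)\in K_{R_1}\subset U_{R_1}$, and $f_j$ is injective on $U_{R_1}$. Hence $\Pi^{-1}B\cap R_1=\tilde f^{-1}\Pi^{-1}\bigl(f_j(B\cap K_{R_1})\bigr)\cap R_1$. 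The set $f_j(B\cap K_{R_1})$ is Borel: it is the image of a Borel set under a continuous map that is injective on the open set $U_{R_1}$, so the Lusin--Souslin theorem applies, or one can use a continuous inverse branch on $f_j(U_{R_1})$. Consequently $\Pi^{-1}\gamma\subset\mathcal A\vee\mathcal R_1$, and the two $\sigma$-algebras agree. The remaining points I expect to need care are the measurability issues and the interaction with the factor $\sigma$-algebra $p^{-1}\mathcal B(\Sigma_m)$, which is simply carried along on both sides. Finally, the second assertion of the proposition follows by integrating the first over $\mu$, using the identity $h_{\Pi,p}(\tilde f,\mu,\mathcal R)=\int h_{\Pi,p}(\tilde f,\mu,x,\mathcal R)\,d\mu(x)$ stated after Definition~\ref{projent2}.
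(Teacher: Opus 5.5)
Your final argument is correct and is essentially the paper's proof: the chain rule applied to $\mathcal R_1\vee\mathcal R_2$ reduces everything to the identity $\mathcal A(\mathcal R_1)\vee\tilde f^{-1}\Pi^{-1}\gamma=\mathcal A(\mathcal R_1)\vee\Pi^{-1}\gamma$, which is Lemma~\ref{simple}, with $p^{-1}\mathcal B(\Sigma_m)$ carried along, and you prove that identity the same way. In a clean write-up, delete the abandoned attempt to show that both conditional informations vanish, since they need not vanish in general; checking that $\mathcal R_1\vee\mathcal R_2$ is $\Pi$-adapted is harmless but unnecessary, because only the adaptedness of the coarser partition is ever used.
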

Proposition~\ref{noproblem} shows that the conditional projection entropy is well defined independently of the choice of the auxiliary partition.
The proof of Proposition ~\ref{noproblem} is given in Section~3.2.

Let $h_{\Pi,p}(\tilde f,\mu)$ and $h_{\Pi,p}(\tilde f,\mu,x)$ denote the common values of the conditional projection entropy and the conditional local projection entropy over all $\Pi$-adapted partitions respectively.

For $x\in J(\tilde f)$ and $r>0$, define 
\[B^\Pi(x,r):=\{y\in J(\tilde f)\colon d_{\widehat{\mathbb C}}(\Pi y,\Pi x)<r\}.\]
For an $\tilde f$-invariant Borel probability measure $\mu$, let 
\[\mu=\int_{\Sigma_m} \mu_{\omega}\ d(p_{\ast}\mu)(\omega)\]
be the disintegration of $\mu$ over $p$ (see, e.g., \cite{Bo, Fur, Hoc} for details).
Our first main result is a Ledrappier--Young type formula for the projected conditional measures.

\begin{thmA}

Let $\mu$ be an $\tilde f$-invariant Borel probability measure on $J(\tilde f)$. Then for $p_\ast \mu$-a.e. $\omega\in \Sigma_m$, for $\mu_{\omega}$-a.e. $x\in p^{-1}(\{\omega\}),$
\[ \lim_{r\to 0}\frac{\log \mu_{\omega}(B^\Pi(x,r))}{\log r}=
\frac{h_{\Pi, p}(\tilde f,\mu, x)}{E_{\mu}(\log\|\tilde f'(x)\| \mid \mathcal I)}.\]
If $\mu$ is ergodic, then
for $p_\ast \mu$-a.e. $\omega\in \Sigma_m$,
for $\mu_{\omega}$-a.e. $x\in p^{-1}(\{\omega\}),$

\[\lim_{r\to 0}
\frac{\log \mu_{\omega}(B^\Pi(x,r))}{\log r}=
\frac{h_{\Pi, p}(\tilde f,\mu)}{\displaystyle\int_{J(\tilde f)} \log\|\tilde f'(x)\|\,d\mu(x)}.
\]
In particular, if $\mu$ is ergodic, then for $p_\ast \mu$-a.e. $\omega\in \Sigma_m$, the measure $\Pi_{\ast}\mu_{\omega}$ is exact dimensional. 
\end{thmA}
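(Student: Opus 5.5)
We follow the strategy of Feng and Hu~\cite{FH}, transplanted to the backward dynamics of $\tilde f$. Fix a $\Pi$-adapted partition $\mathcal R$; by Proposition~\ref{noproblem} it does not matter which. We may also take $\mathcal R$ with small diameter in the $\widehat{\mathbb C}$-coordinate, using Proposition~\ref{generate}. Write $\mathcal R^n:=\bigvee_{k=0}^{n-1}\tilde f^{-k}\mathcal R$ and $\mathcal P:=p^{-1}\mathcal B(\Sigma_m)$.

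\textbf{Step 1: Geometry.} The first step is to compare projected balls with dynamically defined sets. Let $x\in J(\tilde f)$ and $n\ge1$, and set $r_n(x):=\|(\tilde f^n)'(x)\|^{-1}$. By expansion and the injectivity of $f_j$ on $U_R$, the branch of $(f_{\omega_n}\circ\cdots\circ f_{\omega_1})^{-1}$ prescribed by the $\mathcal R^n$-atom of $x$ is a univalent map on a disc of fixed radius $\delta$ around $\Pi\tilde f^n x$. Koebe distortion then gives
\[
\mathcal R^n(x)\cap \tilde f^{-n}B^\Pi(\tilde f^n x, c\delta)\;\subset\; B^\Pi(x, C r_n(x))
\]
and the reverse inclusion at a comparable scale, up to a bounded number of atoms; the bounded multiplicity comes from the degree of the maps. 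Hence $\log\mu_\omega(B^\Pi(x,r))$ at $r\asymp r_n(x)$ is comparable to $\log\mu_\omega\bigl(\mathcal R^n(x)\cap\tilde f^{-n}\Pi^{-1}(\text{ball of size }\delta')\bigr)$. By Birkhoff's theorem, $\frac1n\log r_n(x)\to -E_\mu(\log\|\tilde f'\|\mid\mathcal I)(x)$. Since $\log r_{n+1}-\log r_n$ is bounded, it suffices to prove
\[
-\frac1n\log \mu_\omega\bigl(B^\Pi(x, r_n(x))\bigr)\to h_{\Pi,p}(\tilde f,\mu,x).
\]

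\textbf{Step 2: Entropy via a martingale/ergodic argument.} The quantity $-\log\mu_\omega(\cdot)$ is the conditional information given $\mathcal P$. Heuristically,
\[
\mu_\omega\bigl(B^\Pi(x,r_n)\bigr)\approx \mu\bigl(\mathcal R^n(x)\mid \mathcal P\vee\tilde f^{-n}\Pi^{-1}\gamma\bigr)\cdot(\text{transverse factor}),
\]
and the transverse factor is $e^{o(n)}$. This is the analogue of Feng--Hu's Lemma that conditioning on a small ball is asymptotically equivalent to conditioning on $\Pi^{-1}\gamma$, proved via a maximal inequality for $\sup_r$ of ball conditional measures (a Besicovitch covering on $\widehat{\mathbb C}$ makes this an $L^1$-bounded function). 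Telescoping then gives
\[
I_\mu\bigl(\mathcal R^n\mid\mathcal P\vee\tilde f^{-n}\Pi^{-1}\gamma\bigr)
=\sum_{k=0}^{n-1}\Bigl[I_\mu(\mathcal R\mid\mathcal P\vee\tilde f^{-1}\Pi^{-1}\gamma)-I_\mu(\mathcal R\mid\mathcal P\vee\Pi^{-1}\gamma)\Bigr]\circ\tilde f^k+\text{error}.
\]
This uses three facts: $\tilde f^{-1}\mathcal P\subset\mathcal P$ by \eqref{commute}; $\mathcal R$ together with $\tilde f^{-1}\Pi^{-1}\gamma$ determines $\Pi^{-1}\gamma$ on the atom, by the injectivity in (C$\Pi$); and $\mathcal P$ is in fact $\tilde f^{-1}$-invariant in the needed sense. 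Birkhoff's theorem (applicable since the informations are integrable by the maximal inequality) then gives the limit $h_{\Pi,p}(\tilde f,\mu,x)$. The upper and lower bounds are obtained separately. For the lower bound on dimension, apply Borel--Cantelli with the ball-versus-$\sigma$-algebra comparison. For the upper bound, use a density-point argument: the sets where the conditional measure of balls is exponentially small relative to the atom have small measure.

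\textbf{Step 3: Ergodic case.} In the ergodic case $\mathcal I$ is trivial, so the conditional expectations become integrals and the second formula follows. Exact dimensionality of $\Pi_*\mu_\omega$ follows because $\Pi_*\mu_\omega(B(\Pi x,r))=\mu_\omega(B^\Pi(x,r))$.

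\textbf{Main obstacle.} The main difficulty is Step 2: controlling the error between conditioning on a projected ball of radius $r_n$ and conditioning on $\tilde f^{-n}\Pi^{-1}\gamma$. Because rational maps are only locally injective, the pullback of a ball splits into several branches. We would first try to absorb this splitting using the $\Pi$-adapted atoms and a uniform bound $\max\deg f_j$ on the multiplicity, then adapt Feng--Hu's maximal lemma for conditional measures of balls to the fibered setting.
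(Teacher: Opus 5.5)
Your proposal has a genuine gap: the reverse comparison in Step~1, and with it the identity in Step~2, fails as soon as the generators overlap. That is exactly the situation the projection entropy is designed for.

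For a general factor $p$, for instance the trivial one that gives Theorem~B, $\mu_\omega$ charges points with different symbolic codes. So $B^\Pi(x,r_n(x))$ can meet exponentially many atoms of $\mathcal R^n$; the degree bound only controls preimages along a \emph{fixed} code. Hence $\mu_\omega(B^\Pi(x,r_n(x)))$ is not comparable to $\mu_\omega\bigl(\mathcal R^n(x)\cap\tilde f^{-n}B^\Pi(\tilde f^nx,\delta')\bigr)$. Likewise, the chain rule, $\tilde f^{-1}\mathcal P=\mathcal P$ and Lemma~\ref{simple} (iterated) actually give
\[
I_\mu(\mathcal R^n\mid\mathcal P\vee\tilde f^{-n}\Pi^{-1}\gamma)=\sum_{k=0}^{n-1}I_\mu\bigl(\mathcal R\mid\mathcal P\vee\tilde f^{-1}(\mathcal A(\mathcal R^{n-1-k})\vee\Pi^{-1}\gamma)\bigr)\circ\tilde f^k ,
\]
with $\mathcal R^0$ trivial. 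These summands converge to $I_\mu\bigl(\mathcal R\mid\mathcal P\vee\tilde f^{-1}(\bigvee_{j}\mathcal A(\mathcal R^{j})\vee\Pi^{-1}\gamma)\bigr)$, which measures the full entropy of the $\mathcal R$-process, not the differences defining $h_{\Pi,p}$.

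A concrete check: take $f_1=f_2=z^2$, $p$ trivial, and $\mu$ the product of the $(\frac12,\frac12)$-Bernoulli measure with normalized Lebesgue measure on the unit circle. Then $h_{\Pi,p}=\log 2$ and $\mu(B^\Pi(x,2^{-n}))\asymp 2^{-n}$. But $H_\mu(\mathcal R^n\mid\tilde f^{-n}\Pi^{-1}\gamma)=2n\log 2$, and the cylinder part $\mathcal R^n(x)\cap\tilde f^{-n}B^\Pi(\tilde f^nx,\delta')$ has measure $\asymp 4^{-n}$. Each of your two heuristics is off by an exponential factor. The errors happen to cancel in the sum you wrote down, but neither step can be proved.

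The missing idea is to telescope the ball measures themselves and never pass to $n$-cylinders. Set $r_n^\pm(x)=c^{\pm n}\|(\tilde f^n)'(x)\|^{-1}r_\ast$ and let $H^\pm_n(x)$ be the logarithm of $\mu_x(B^\Pi(x,r^\pm_n(x)))/\mu_{\tilde fx}(B^\Pi(\tilde fx,r^\pm_{n-1}(\tilde fx)))$. Then $\log\mu_x(B^\Pi(x,r_n^\pm(x)))=\sum_{j=0}^{n-1}H^\pm_{n-j}(\tilde f^jx)+\log\mu_{\tilde f^nx}(B^\Pi(\tilde f^nx,r_\ast))$.

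\begin{itemize}
\item Lemma~\ref{fibinv} turns each denominator into $\mu_x(B^{\Pi\circ\tilde f}(x,r^\pm_{n-1}(\tilde fx)))$.
\item The one-step distortion of Lemma~\ref{Ballesti}, used only inside the atom $\mathcal R(x)$, gives $H_n^+\ge Q_n^+-P_n^+$ and $H_n^-\le Q_n^--P_n^-$. Here $P_n^\pm$ and $Q_n^\pm$ are the logarithms of the proportions of the $\Pi$-ball and of the $(\Pi\circ\tilde f)$-ball lying in $\mathcal R(x)$.
\item These proportions are exactly what records the overlaps. Lemma~\ref{prodensity} gives their convergence, a.e.\ and in $L^1$, to $-I_\mu(\mathcal R\mid\mathcal P\vee\Pi^{-1}\gamma)$ and $-I_\mu(\mathcal R\mid\mathcal P\vee\tilde f^{-1}\Pi^{-1}\gamma)$.
\item Because the summands depend on $n-j$, Birkhoff's theorem is not enough; Maker's theorem (Lemma~\ref{Manelem}) is needed.
\item Lemma~\ref{prozero} removes the last term, which is your ``transverse factor''.
\end{itemize}

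This route also makes your $n$-step univalence and Koebe argument unnecessary. Only the finitely many one-step branches $\varphi_R$ enter, and the factor $c^{\pm n}$ absorbs their distortion. Your Step~3 is fine.
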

\begin{rem}
\begin{itemize}
\item In the case of a single expanding rational map, Ma\~n\'e~\cite{Mane2} proved that every ergodic invariant measure is exact dimensional. For random conformal expanding maps, Simmons and Urba\'nski~\cite{SiUr} proved that the conditional measures of an ergodic invariant measure are exact dimensional  for almost every fiber. In contrast, in the present setting we study projected conditional measures, for which the loss of information under projection has to be taken into account.

\item Mihailescu and Urba\'nski~\cite[Theorem~6.2]{MiUr20} also studied exact dimensionality of
projected conditional measures. Their setting, however, is different: they
consider equilibrium states under separation-type assumptions, while our
result applies to arbitrary invariant measures and accounts for overlaps
via conditional projection entropy.
\end{itemize}
\end{rem}
In the setting of IFSs, the push-forward of a shift-invariant measure on the symbolic space under the canonical coding map is usually regarded as an invariant measure for the IFS.
In the same spirit, if $\mu$ is a $\tilde f$-invariant measure, then it is natural to regard its projection $\Pi_{\ast}\mu$ as an invariant measure for the rational semigroup. By \eqref{relation}, the invariant measure for a rational semigroup is supported on the Julia set of the semigroup. 
Applying Theorem A to the trivial factor yields the following theorem. 
\begin{thmB}

Let $\mu$ be an $\tilde f$-invariant Borel probability measure on
$J(\tilde f)$. Then for $\mu$-a.e. $x\in J(\tilde f)$,
\[ \lim_{r\to 0}
\frac{\log \mu(B^\Pi(x,r))}{\log r}=
\frac{h_\Pi(\tilde f,\mu, x)}{E_{\mu}(\log\|\tilde f'(x)\| \mid \mathcal I)},\]
where 
\[h_\Pi(\tilde{f},\mu, x):=E_{\mu}\left(I_\mu(\mathcal R\mid \tilde{f}^{-1}\Pi^{-1}\gamma)-
I_\mu(\mathcal R\mid \Pi^{-1}\gamma)\mid \mathcal I\right)(x),\]
$\mathcal R$ is a $\Pi$-adapted partition.

If $\mu$ is ergodic, then for $\mu$-a.e. $x\in J(\tilde f)$,
\[
\lim_{r\to 0}\frac{\log \mu(B^\Pi(x,r))}{\log r}=
\frac{h_\Pi(\tilde f,\mu)}{\displaystyle\int_{J(\tilde f)} \log\|\tilde f'(x)\|\,d\mu(x)},
\]
where  \[h_\Pi(\tilde{f},\mu) :=H_\mu(\mathcal R\mid \tilde{f}^{-1}\Pi^{-1}\gamma)-H_\mu(\mathcal R\mid \Pi^{-1}\gamma),\]
$\mathcal R$ is a $\Pi$-adapted partition. In particular, if $\mu$ is ergodic, then the measure $\Pi_{\ast}\mu$ is exact dimensional. 

\end{thmB}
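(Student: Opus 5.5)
Theorem B will be deduced from Theorem A by taking the trivial factor. We choose $p\colon J(\tilde f)\to\Sigma_m$ to be a constant map, $p(x)\equiv \omega^\ast$, where $\omega^\ast$ is a fixed point of $\sigma$ (for instance $\omega^\ast=(\ldots,1,1,1,\ldots)$). Then $\sigma\circ p=p\circ\tilde f$, so \eqref{commute} holds. The map $p$ is Borel measurable, and $p^{-1}\mathcal B(\Sigma_m)=\{\emptyset,J(\tilde f)\}$ is the trivial $\sigma$-algebra.

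The first step is to identify the disintegration. We have $p_\ast\mu=\delta_{\omega^\ast}$, and $p^{-1}(\{\omega^\ast\})=J(\tilde f)$. The disintegration $\mu=\int\mu_\omega\,d(p_\ast\mu)(\omega)$ is unique up to $p_\ast\mu$-null sets, so $\mu_{\omega^\ast}=\mu$. Consequently "for $p_\ast\mu$-a.e. $\omega$, for $\mu_\omega$-a.e. $x\in p^{-1}(\{\omega\})$" reduces to "for $\mu$-a.e. $x\in J(\tilde f)$". Also $\mu_{\omega^\ast}(B^\Pi(x,r))=\mu(B^\Pi(x,r))$.

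The second step is to identify the entropies. For any $\sigma$-algebra $\mathcal A$ we have $\{\emptyset,J(\tilde f)\}\vee\mathcal A=\mathcal A$. Hence
\[
I_\mu(\mathcal R\mid p^{-1}\mathcal B(\Sigma_m)\vee\tilde f^{-1}\Pi^{-1}\gamma)=I_\mu(\mathcal R\mid \tilde f^{-1}\Pi^{-1}\gamma),
\]
and the same holds with $\Pi^{-1}\gamma$ in place of $\tilde f^{-1}\Pi^{-1}\gamma$. Therefore $h_{\Pi,p}(\tilde f,\mu,x)=h_\Pi(\tilde f,\mu,x)$ for $\mu$-a.e. $x$, and $h_{\Pi,p}(\tilde f,\mu)=h_\Pi(\tilde f,\mu)$, for any $\Pi$-adapted $\mathcal R$. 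By Proposition~\ref{noproblem}, these quantities do not depend on the choice of $\mathcal R$. Such an $\mathcal R$ exists by Proposition~\ref{generate}.

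Substituting these identifications into both parts of Theorem A gives the two displayed formulas of Theorem B. For exact dimensionality, note that $\Pi_\ast\mu(B(\Pi x,r))=\mu(B^\Pi(x,r))$. Every point $z$ in a $\Pi_\ast\mu$-full set can be written as $z=\Pi x$ with $x$ in a $\mu$-full set. So in the ergodic case the local dimension of $\Pi_\ast\mu$ equals the constant $h_\Pi(\tilde f,\mu)/\int\log\|\tilde f'\|\,d\mu$ at $\Pi_\ast\mu$-a.e. point.

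The only point that needs care is measurability: the image of a $\mu$-full set under $\Pi$ should be $\Pi_\ast\mu$-full. The plan is to pass to a $\mu$-full compact (or $\sigma$-compact) subset $E$ using inner regularity. Then $\Pi(E)$ is $\sigma$-compact, hence Borel, and $\Pi_\ast\mu(\Pi(E))\ge\mu(E)=1$. Apart from this, the argument is a direct specialization of Theorem A.
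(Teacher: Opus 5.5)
Your proposal is correct and matches the paper's proof: take $p$ constant at a fixed point of $\sigma$, so that $p^{-1}\mathcal B(\Sigma_m)$ is trivial, the disintegration reduces to $\mu$ itself, and Theorem A specializes directly to Theorem B. Your additional details fill in steps the paper leaves implicit, namely identifying the conditional informations and passing from $\mu$-a.e.\ $x$ to $\Pi_\ast\mu$-a.e.\ point via $\Pi_\ast\mu(B_{\widehat{\mathbb C}}(\Pi x,r))=\mu(B^\Pi(x,r))$.
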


\subsection{Structure of the paper}
The rest of the paper is organized as follows. Throughout this paper, we mainly rely on the arguments of Feng and Hu~\cite{FH}, with additional ideas inspired by Mihailescu and  Urba\'nski~\cite{MiUr16}.
In Section~2, we collect the measure-theoretic and geometric ingredients needed for the proofs of the main results. 
In Section~3, we first construct a $\Pi$-adapted partition and prove that the conditional projection entropy is independent of the choice of the auxiliary $\Pi$-adapted partition. We then prove Theorem A, from which Theorem B follows by taking the trivial factor.

\section{Preliminaries}
This section summarizes the preliminary results needed in the sequel.
\subsection{A basic property of $\Pi$-adapted partitions}
In this section, we give a basic property of the $\Pi$-adapted partitions, which is the analogue of \cite[Lemma 4.7]{FH} for the one-cylinder partition of an IFS.
For a Borel partition $\mathcal R=\{R_1,\ldots, R_k\}$ of $J(\tilde f)$, let $\mathcal A(\mathcal R)$ denote the $\sigma$-algebra associated with $\mathcal R.$ Namely 
\[\mathcal A(\mathcal R)
=\left\{\bigcup_{i\in I}R_i \colon I\subset\{1,\ldots,k\}\right\}.\]
\begin{lem}
\label{simple}
Let $\tilde f$ be the associated skew product for $(f_1,\ldots,f_m)\in({\rm Rat})^m$.
Let $\mathcal R$ be a $\Pi$-adapted partition.
Then
\[\mathcal A(\mathcal R)\vee \tilde f^{-1}\Pi^{-1}\gamma =\mathcal A(\mathcal R)\vee \Pi^{-1}\gamma .\]
\end{lem}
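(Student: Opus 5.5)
We prove the two inclusions separately. Throughout, all $\sigma$-algebras are regarded as sub-$\sigma$-algebras of $\mathcal B(J(\tilde f))$. Write $\Pi\circ\tilde f(\omega,z)=f_{\omega_1}(z)$.

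\emph{Inclusion $\supseteq$.} It suffices to show $\Pi^{-1}\gamma\subset \mathcal A(\mathcal R)\vee\tilde f^{-1}\Pi^{-1}\gamma$. Fix $B\in\gamma$. Since $\mathcal R$ is finite, decompose
\[\Pi^{-1}B=\bigcup_{R\in\mathcal R}\bigl(R\cap \Pi^{-1}B\bigr),\]
so it is enough to show $R\cap\Pi^{-1}B\in\mathcal A(\mathcal R)\vee\tilde f^{-1}\Pi^{-1}\gamma$ for each $R$. Take $j$, $K_R$, $U_R$ from (C$\Pi$). On $R$ we have $\omega_1=j$ and $z\in K_R\subset U_R$.

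Since $f_j|_{U_R}$ is injective and continuous on the open set $U_R$, it is an open map onto the open set $f_j(U_R)$ (a holomorphic injection is open). Its inverse $g:=(f_j|_{U_R})^{-1}\colon f_j(U_R)\to U_R$ is therefore continuous, and hence Borel. It follows that $B':=g^{-1}(B\cap U_R)=f_j(B\cap U_R)$ is a Borel subset of $\widehat{\mathbb C}$.

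Now let $x=(\omega,z)\in R$. Then $z\in U_R$, so $z\in B$ if and only if $f_j(z)\in B'$, by injectivity of $f_j$ on $U_R$. Since $\omega_1=j$, we have $f_j(z)=\Pi\tilde f(x)$. Thus
\[R\cap \Pi^{-1}B = R\cap \tilde f^{-1}\Pi^{-1}B',\]
which lies in $\mathcal A(\mathcal R)\vee\tilde f^{-1}\Pi^{-1}\gamma$.

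\emph{Inclusion $\subseteq$.} It suffices to show $\tilde f^{-1}\Pi^{-1}\gamma\subset \mathcal A(\mathcal R)\vee\Pi^{-1}\gamma$. For $B\in\gamma$ and $R\in\mathcal R$ with symbol $j$, points of $R$ satisfy $\Pi\tilde f(x)=f_j(\Pi x)$. Hence
\[R\cap\tilde f^{-1}\Pi^{-1}B=R\cap\Pi^{-1}\bigl(f_j^{-1}B\bigr),\]
and $f_j^{-1}B$ is Borel because $f_j$ is continuous. Taking the union over the finitely many $R\in\mathcal R$ gives the claim. Note that this direction only uses that each $R$ lies in a single cylinder $[j]$; it does not need injectivity.

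The only delicate point is the Borel measurability of $f_j(B\cap U_R)$ in the first inclusion. It is handled by the continuity of the local inverse branch $g$. Alternatively, one can invoke Lusin–Souslin: an injective continuous image of a Borel set is Borel. The compactness of $K_R$ is not needed for this lemma.
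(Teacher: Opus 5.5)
Your proof is correct and follows the paper's argument essentially step for step. Both use the identity $R\cap\tilde f^{-1}\Pi^{-1}B=R\cap\Pi^{-1}(f_j^{-1}B)$ for one inclusion, and $R\cap\Pi^{-1}B=R\cap\tilde f^{-1}\Pi^{-1}\bigl(f_j(B\cap U_R)\bigr)$ for the other, with Borel measurability of $f_j(B\cap U_R)$ coming from $f_j|_{U_R}$ being a homeomorphism onto the open set $f_j(U_R)$.
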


\begin{proof}
We first prove
\begin{equation}\label{inclusion1}\mathcal A(\mathcal R)\vee \tilde f^{-1}\Pi^{-1}\gamma\subset \mathcal A(\mathcal R)\vee \Pi^{-1}\gamma .\end{equation}
It is enough to show that, for every $R\in\mathcal R$ and every $B\in\gamma$,
\[R\cap \tilde f^{-1}\Pi^{-1}B\in\mathcal A({\mathcal R})\vee \Pi^{-1}\gamma.\]
By (C$\Pi$), there exists $j=j(R)\in \{1,\ldots, m\}$ such that
$R\subset [j]\times \widehat{\mathbb C}$. Hence, for any $x=(\omega,z)\in R\cap \tilde f^{-1}\Pi^{-1}B$,
we have $\omega_1=j$, which implies that
\[\Pi(\tilde f x)=f_{j}(\Pi x).\]
Therefore, 
\[R\cap \tilde f^{-1}\Pi^{-1}B=R\cap \Pi^{-1}\bigl(f_{j}^{-1}(B)\bigr).\]
Since $f_{j}^{-1}(B)\in\gamma$, the right-hand side belongs to
$\mathcal A(\mathcal R)\vee \Pi^{-1}\gamma$. This implies \eqref{inclusion1}.

We next prove the reverse inclusion
\begin{equation}\label{inclusion2}\mathcal A(\mathcal R)\vee\Pi^{-1}\gamma \subset \mathcal A(\mathcal R)\vee \tilde f^{-1}\Pi^{-1}\gamma .\end{equation}
Let $R\in\mathcal R$. By (C$\Pi$), there exist $j=j(R)\in\{1,\ldots,m\}$ and an open set $U_R$ 
such that
\begin{equation}\label{good}R\subset J(\tilde f)\cap([j]\times U_R),\end{equation}
and $f_j$ is injective on $U_R$.
Let $C\in\gamma$.
We claim that
\[R\cap \Pi^{-1}C=R\cap \tilde f^{-1}\Pi^{-1}\bigl(f_j(C\cap U_R)\bigr).\]
Indeed, if $x=(\omega, z)\in R\cap \Pi^{-1}C$, then by \eqref{good}  we have $\omega_1=j$ and $\Pi x\in C\cap U_R$. Hence,
\[ \Pi(\tilde f x)=f_j(\Pi x)\in f_j(C\cap U_R),\]
which implies that
\[R\cap \Pi^{-1}C\subset R\cap \tilde f^{-1}\Pi^{-1}\bigl(f_j(C\cap U_R)\bigr).\]
Conversely, suppose that
\[x\in R\cap \tilde f^{-1}\Pi^{-1}\bigl(f_j(C\cap U_R)\bigr).\]
Then $\Pi x\in U_R$, and there exists $w\in C\cap U_R$ such that
\[f_j(\Pi x)=\Pi(\tilde f x)=f_j(w).\]
Since $\Pi x,w\in U_R$ and $f_j|_{U_R}$ is injective, we obtain
$\Pi x=w\in C$. Hence, $x\in R\cap \Pi^{-1}C$. This proves the claim.

Since $f_j|_{U_R}$ is injective and holomorphic, it is a homeomorphism from $U_R$ onto the open set $f_j(U_R)$. Hence,
\begin{equation*}
f_j(C\cap U_R)\in\gamma .\end{equation*}
Therefore,
\[R\cap \Pi^{-1}C\in\mathcal A(\mathcal R)\vee \tilde f^{-1}\Pi^{-1}\gamma ,\]
and \eqref{inclusion2} follows. Combining \eqref{inclusion1} and \eqref{inclusion2} completes the proof.
\end{proof}
\subsection{Conditional measures and density estimates}
In this section we give several density estimates used to relate conditional information to the local scaling of projected measures. 
Let $\tilde f: J(\tilde f)\to J(\tilde f)$ be the associated skew product for $(f_1,\ldots,f_m)\in({\rm Rat})^m$ and let $\mu$ be a Borel probability measure on $J(\tilde f)$. Since $J(\tilde f)$ is a compact metric space, $(J(\tilde f),\mathcal B(J(\tilde f)),\mu)$ is a Lebesgue probability space.
Let $p: J(\tilde f)\to \Sigma_m$ be a measurable map. The fibers of $p$ define a measurable partition
\[\eta_p=\{p^{-1}(\{y\})\colon y\in\Sigma_m\}.\]
Let
\[
\mu=\int_{\Sigma_m} \mu_y\,d(p_{\ast}\mu)(y)\]
be the disintegration of $\mu$ over $p$. Hence, $\mu_y$ is supported on
$p^{-1}(\{y\})$ for $p_{\ast}\mu$-a.e. $y$.
\begin{rem}
\label{disint}
For $x\in J(\tilde f)$, we write $\mu_x:=\mu_{p(x)}.$
Then for any measurable property $P(x)$, 
\[\text{for $\mu$-a.e. $x$, $P(x)$ holds }\] if and only if
\[\text{for $p_{\ast}\mu$-a.e. $y$, for $\mu_y$-a.e. $x\in p^{-1}\{y\}$, $P(x)$ holds }.\]\end{rem}

Let $\pi: J(\tilde f)\rightarrow \widehat{\mathbb C}$ be a measurable function. For $x\in J(\tilde f)$ and $r>0$, set
\[B^\pi(x,r):=\{y\in J(\tilde f)\colon d_{\widehat{\mathbb C}}(\pi(y),\pi(x))<r\}.\]
The following density lemma, which is a key ingredient in the proof of Theorem A, is a direct consequence of \cite[Proposition 3.5]{FH} applied to the partition $\eta_p$. Although \cite[Proposition 3.5]{FH} is stated for Euclidean balls, it also applies to spherical balls, since $\widehat{\mathbb C}$ is covered by two stereographic coordinate charts and the spherical metric is locally bi-Lipschitz equivalent to the Euclidean metric in these charts (see also \cite[Lemma~2.4]{Rap} for a statement in a more general setting).

\begin{lem}
\label{prodensity}
Let $\xi$ be a finite Borel partition of $J(\tilde f)$. For $x\in J(\tilde f)$, let $\xi(x)$ denote the element of $\xi$ containing $x$. Suppose that $\pi: J(\tilde f)\rightarrow \widehat{\mathbb C}$ is a measurable function. Then for $\mu$-a.e. $x\in J(\tilde f)$ we have
\[ \lim_{r\to0}\log\frac{\mu_x\bigl(B^\pi(x,r)\cap \mathcal \xi(x)\bigr)}{\mu_x\bigl(B^\pi(x,r)\bigr)}=
-I_\mu(\xi\mid  p^{-1}\mathcal B(\Sigma_m)\vee\pi^{-1}\gamma)(x).\]
Furthermore, set
\[h(x)=-\inf_{r>0}\log
\frac{\mu_{x}\bigl(B^\pi(x,r)\cap \mathcal \xi(x)\bigr) }{\mu_{x}\bigl(B^\pi(x,r)\bigr)}.\] 
Then $h\ge 0$ and $h\in L^1(J(\tilde f)).$
\end{lem}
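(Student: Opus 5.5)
The plan is to reduce Lemma~\ref{prodensity} to the abstract density result \cite[Proposition~3.5]{FH}, working on the Lebesgue space $(J(\tilde f),\mathcal B(J(\tilde f)),\mu)$. The paper already says this is a direct consequence, so the task is to check the reduction carefully. The key observation is that conditioning on $p^{-1}\mathcal B(\Sigma_m)$ amounts to passing to the conditional measures $\mu_x=\mu_{p(x)}$. Concretely, for $A\in\xi$ and a ball $B\subset\widehat{\mathbb C}$ we have
\[
\frac{\mu_x(\pi^{-1}B\cap A)}{\mu_x(\pi^{-1}B)}=\frac{E_\mu(\mathbf 1_{\pi^{-1}B\cap A}\mid p^{-1}\mathcal B(\Sigma_m))(x)}{E_\mu(\mathbf 1_{\pi^{-1}B}\mid p^{-1}\mathcal B(\Sigma_m))(x)}
\]
for $\mu$-a.e.\ $x$. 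This holds because of the standard identification of conditional expectation with integration against the disintegration along the measurable partition $\eta_p$.

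Step one is the fiberwise reduction. For $p_\ast\mu$-a.e.\ $y$, apply the unconditioned version of \cite[Proposition~3.5]{FH} to the probability measure $\mu_y$ and the map $\pi$. It gives, for $\mu_y$-a.e.\ $x$,
\[
\frac{\mu_y(B^\pi(x,r)\cap\xi(x))}{\mu_y(B^\pi(x,r))}\to E_{\mu_y}(\mathbf 1_{\xi(x)}\mid\pi^{-1}\gamma)(x),
\]
together with the maximal inequality $\int \sup_r(\cdots)\,d\mu_y\le C\,H$-type bounds, uniformly in $y$.

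Step two identifies the fiberwise limit with the global one. I need
\[
E_{\mu_y}(\mathbf 1_A\mid\pi^{-1}\gamma)=E_\mu(\mathbf 1_A\mid p^{-1}\mathcal B(\Sigma_m)\vee\pi^{-1}\gamma)
\]
$\mu_y$-a.e., for $p_\ast\mu$-a.e.\ $y$. I would verify this on generators $p^{-1}D\cap\pi^{-1}B$ of the join, using $\mu=\int\mu_y\,d(p_\ast\mu)(y)$ together with countable generation of $\gamma$ and of $\mathcal B(\Sigma_m)$. This is where measurability of the fiber family matters, and it is the step I expect to be the main obstacle. Taking $-\log$ and applying Remark~\ref{disint} then yields the a.e.\ limit, because $I_\mu(\xi\mid\cdot)(x)=-\log E_\mu(\mathbf 1_{\xi(x)}\mid\cdot)(x)$.

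Step three handles the metric. The argument in \cite{FH} uses Besicovitch-type covering for Euclidean balls in $\mathbb R^d$. I would cover $\widehat{\mathbb C}$ by two closed spherical caps, each mapped into $\mathbb C$ by a stereographic chart that is bi-Lipschitz on the cap. Spherical balls of radius $r$ around points of a cap are then sandwiched between Euclidean balls of radii $c r$ and $C r$. The Besicovitch covering property survives bi-Lipschitz change for small radii; alternatively I would cite \cite[Lemma~2.4]{Rap}, which treats metric spaces with this doubling/covering property. Hence the martingale-type convergence and the maximal inequality hold for spherical balls.

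Finally, integrability of $h$. \cite[Proposition~3.5]{FH} gives
\[
\mu_y\Bigl(\Bigl\{x:\inf_r \tfrac{\mu_y(B^\pi(x,r)\cap\xi(x))}{\mu_y(B^\pi(x,r))}<t\Bigr\}\cap A\Bigr)\le C\,t
\]
for each $A\in\xi$, with $C$ depending only on the Besicovitch constant. Integrating over $y$ gives
\[
\mu(h>s)\le C\,(\#\xi)\,e^{-s},
\]
so $\int h\,d\mu\le\int_0^\infty \min\{1,C\#\xi\,e^{-s}\}\,ds<\infty$. That $h\ge0$ holds trivially, since the ratio is at most $1$.
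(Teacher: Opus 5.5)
Your proposal is correct and is essentially the paper's argument. The paper simply invokes \cite[Proposition~3.5]{FH} for the measurable partition $\eta_p$, which already contains your fiberwise Steps one and two and the $L^1$ bound on $h$, and it handles spherical balls via stereographic charts or \cite[Lemma~2.4]{Rap}.

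One caution on Step three: the Besicovitch covering property is not a bi-Lipschitz invariant in general, so arguing through charts needs more than bi-Lipschitz comparability. It is cleaner to regard $\pi$ as taking values in the unit sphere $S^2\subset\mathbb R^3$. There, spherical balls are exactly traces of Euclidean balls of $\mathbb R^3$ centred on $S^2$, up to a monotone reparametrization of the radius, so \cite[Proposition~3.5]{FH} with $d=3$ applies verbatim.
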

The following density lemma is a direct consequence of \cite[Proposition 3.9]{FH}.

\begin{lem}
\label{prozero}
Assume that $\mu$ is $\tilde f$-invariant.
Let $\pi: J(\tilde f)\rightarrow \widehat{\mathbb C}$ be a bounded measurable function. Then for any $r>0$,
\[\lim_{n\to\infty}\frac{1}{n}\log \mu_{\tilde f^n x}\bigl(B^\pi(\tilde f^n  x,r)\bigr)= 0\]
for $\mu$-a.e. $x\in J(\tilde f) $.
\end{lem}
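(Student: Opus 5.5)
The plan is to reduce the lemma to the integrability of a single conditional information function and then use a Borel--Cantelli argument along the orbit. The upper bound is trivial: $\mu_{\tilde f^n x}(B^\pi(\tilde f^n x,r))\le 1$, so
\[\limsup_{n\to\infty}\frac1n\log \mu_{\tilde f^n x}\bigl(B^\pi(\tilde f^n x,r)\bigr)\le 0 .\]
All the work is therefore in the matching lower bound $\liminf_{n}\frac1n\log\mu_{\tilde f^n x}(B^\pi(\tilde f^n x,r))\ge 0$ for $\mu$-a.e. $x$.

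Fix $r>0$. Since $\pi$ is bounded (indeed $\widehat{\mathbb C}$ is compact), we choose a finite Borel partition $\mathcal Q=\{Q_1,\ldots,Q_N\}$ of $\widehat{\mathbb C}$ into sets of spherical diameter $<r$. Let $\zeta:=\pi^{-1}\mathcal Q$, a finite Borel partition of $J(\tilde f)$, and write $\zeta(y)$ for its element containing $y$. If $\pi(y)\in Q_i$, then every $y'$ with $\pi(y')\in Q_i$ satisfies $d_{\widehat{\mathbb C}}(\pi y',\pi y)<r$. Hence $\zeta(y)\subset B^\pi(y,r)$, and so
\[\mu_y\bigl(B^\pi(y,r)\bigr)\ge \mu_y(\zeta(y))\quad\text{for every } y.\]
By the defining property of the disintegration over $p$, we have $\mu_y(A)=E_\mu(\mathbf 1_A\mid p^{-1}\mathcal B(\Sigma_m))(y)$ for each of the finitely many $A\in\zeta$, for $y$ outside a $\mu$-null set $N_0$. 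Consequently, for $y\notin N_0$,
\[\varphi(y):=-\log\mu_y(\zeta(y))=I_\mu\bigl(\zeta\mid p^{-1}\mathcal B(\Sigma_m)\bigr)(y).\]
Here $\varphi\ge 0$ and $\int\varphi\,d\mu=H_\mu(\zeta\mid p^{-1}\mathcal B(\Sigma_m))\le\log N<\infty$, so $\varphi\in L^1(\mu)$.

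Next we use invariance. Since $\mu$ is $\tilde f$-invariant, $\mu(\tilde f^{-n}N_0)=0$ for all $n$, so outside the null set $\bigcup_n\tilde f^{-n}N_0$ we have
\[\log\mu_{\tilde f^n x}\bigl(B^\pi(\tilde f^n x,r)\bigr)\ge-\varphi(\tilde f^n x)\quad\text{for all } n.\]
It remains to show $\varphi(\tilde f^n x)/n\to 0$ $\mu$-a.e. This is the standard fact for a nonnegative integrable function under a measure-preserving map. For each $\varepsilon>0$,
\[\sum_n\mu\{x\colon\varphi(\tilde f^n x)>n\varepsilon\}=\sum_n\mu\{\varphi>n\varepsilon\}\le\frac{1}{\varepsilon}\int\varphi\,d\mu<\infty .\]
By Borel--Cantelli, $\varphi(\tilde f^n x)\le n\varepsilon$ for all large $n$, for $\mu$-a.e. $x$. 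Taking $\varepsilon=1/k$ and intersecting over $k$ gives the claim, which yields the required $\liminf\ge 0$ and hence the lemma.

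The only delicate point is the measure-theoretic bookkeeping: $\mu_{\tilde f^n x}$ means $\mu_{p(\tilde f^n x)}$, and the identification with conditional expectations holds only almost everywhere. This is handled by the countable union of pulled-back null sets above. No geometry of $\tilde f$ is used, and boundedness of $\pi$ enters only through the finiteness of $\mathcal Q$.
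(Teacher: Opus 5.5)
Your proof is correct. The lower bound $\mu_y(B^\pi(y,r))\ge \mu_y(\zeta(y))$ via the finite partition $\zeta=\pi^{-1}\mathcal Q$ into pieces of spherical diameter $<r$ is valid, as is the identification of $-\log\mu_y(\zeta(y))$ with $I_\mu(\zeta\mid p^{-1}\mathcal B(\Sigma_m))$, which makes it integrable with integral at most $\log N$. The Borel--Cantelli step using $\tilde f$-invariance of $\mu$ then gives $\varphi(\tilde f^n x)/n\to 0$ almost everywhere.

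The paper gives no argument of its own beyond citing \cite[Proposition~3.9]{FH}. As far as I recall, your self-contained argument is the standard proof of that result, specialized to the present setting.
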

We also need the following result.
\begin{lem}\cite[Corollary 1.6, p. 96]{Mane}
\label{Manelem}

Let $T:X\to X$ be a measure-preserving transformation on a probability space $(X, \mathcal B, \nu)$. Let $F_k\in L^1(X, \nu)$ be a sequence that converges almost everywhere and in $L^1$ to $F\in L^1(X, \nu).$ Then \[\lim_{k\to \infty}\frac{1}{k}\sum_{j=0}^{k-1}F_{k-j}(T^j(x))=E_{\nu}(F \mid \mathcal I)(x)\]
almost everywhere and in $L^1.$
\end{lem}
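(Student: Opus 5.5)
The plan is to split off the limit function and treat the error separately. For each $k$ and $x$ write
\[
\frac{1}{k}\sum_{j=0}^{k-1}F_{k-j}(T^jx)=\frac{1}{k}\sum_{j=0}^{k-1}F(T^jx)+\frac{1}{k}\sum_{j=0}^{k-1}\bigl(F_{k-j}-F\bigr)(T^jx).
\]
By Birkhoff's ergodic theorem (non-ergodic form), the first average converges to $E_{\nu}(F\mid\mathcal I)$ almost everywhere and in $L^1$. It therefore suffices to show that the error average
\[
E_k(x):=\frac{1}{k}\sum_{j=0}^{k-1}\bigl(F_{k-j}-F\bigr)(T^jx)
\]
tends to $0$ almost everywhere and in $L^1$.

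\textbf{The $L^1$ part.} Since $T$ preserves $\nu$, we have $\|(F_{k-j}-F)\circ T^j\|_1=\|F_{k-j}-F\|_1$. Hence
\[
\|E_k\|_1\le\frac{1}{k}\sum_{i=1}^{k}\|F_i-F\|_1 .
\]
This is a Cesàro mean of a sequence tending to $0$, so it tends to $0$.

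\textbf{The almost everywhere part.} Fix $N$ and put $G_N:=\sup_{n\ge N}|F_n-F|$. I split the sum defining $E_k$ according to whether $k-j\ge N$ or $k-j<N$.
\begin{itemize}
\item For the indices $j\le k-N$ we have $k-j\ge N$, so the corresponding terms are bounded by $\frac1k\sum_{j=0}^{k-1}G_N(T^jx)$. By Birkhoff this converges to $E_\nu(G_N\mid\mathcal I)(x)$.
\item The remaining at most $N-1$ terms are $\frac1k\sum_{i=1}^{N-1}|F_i-F|(T^{k-i}x)$. For an integrable $g$, Birkhoff gives $\frac1k g(T^kx)=\frac1k\sum_{j=0}^{k}g(T^jx)-\frac{1}{k}\sum_{j=0}^{k-1}g(T^jx)\to 0$ almost everywhere. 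Applying this with $g=|F_i-F|$ for each of the finitely many $i<N$ shows that these terms tend to $0$.
\end{itemize}
Consequently $\limsup_k|E_k|\le E_\nu(G_N\mid\mathcal I)$ almost everywhere, for every $N$. Since $G_N\downarrow0$ almost everywhere by the a.e. convergence of $F_n$, monotone convergence for conditional expectations gives $E_\nu(G_N\mid\mathcal I)\to0$ almost everywhere. This would finish the proof.

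\textbf{Main obstacle.} The step I expect to be the obstacle is that this argument needs $G_N\in L^1$, that is, an integrable dominating function $\sup_n|F_n|\in L^1$ (Maker's condition). A.e. plus $L^1$ convergence alone does not obviously provide this. I would first check whether the hypothesis in \cite{Mane} includes $\sup_k|F_k|\in L^1$, and use it if so. Failing that, I would use domination directly where the lemma is applied. There the $F_k$ are conditional-information differences controlled by the integrable maximal function $h$ of Lemma~\ref{prodensity}, so $\sup_k|F_k|\in L^1$ holds and the argument above applies verbatim.
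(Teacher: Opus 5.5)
The paper gives no proof here, only the citation. Your argument is the standard proof of Maker's ergodic theorem: Birkhoff for $F$, the Ces\`aro bound on $\|F_i-F\|_1$ for the $L^1$ part, and $G_N=\sup_{n\ge N}|F_n-F|$ for the a.e.\ part. The $L^1$ part is complete as written.

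The obstacle you flag is real, and it is a defect of the printed statement rather than of your method. The almost-everywhere conclusion is false under a.e.\ and $L^1$ convergence alone. Here is a counterexample.
\begin{itemize}
\item Let $T(x_0,x_1,\ldots)=(x_1,x_2,\ldots)$ be the shift on $X=[0,1]^{\{0,1,2,\ldots\}}$ with product Lebesgue measure $\nu$, and let $F=0$.
\item Put $F_{2^i}:=4^i\mathbf 1_{\{x_0<4^{-i}/i\}}$ for $i\ge1$, and $F_k:=0$ if $k$ is not a power of $2$.
\item Then $\|F_{2^i}\|_1=1/i\to0$, and $F_k(x)\to0$ whenever $x_0>0$.
\item Let $A_i:=\{x\colon x_j<4^{-i}/i \text{ for some integer } j\in[4^i/2,\,4^i-2^i]\}$. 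These events depend on pairwise disjoint blocks of coordinates, so they are independent.
\item For $i\ge2$, $\nu(A_i)\ge 1-e^{-1/(4i)}\ge 1/(8i)$. By the second Borel--Cantelli lemma, a.e.\ $x$ lies in infinitely many $A_i$.
\item For such $x$ and $j$, put $n:=j+2^i\le 4^i$. The $j$-th term of $\sum_{l=0}^{n-1}F_{n-l}(T^lx)$ is $F_{2^i}(T^jx)=4^i\ge n$.
\item So the averages are $\ge1$ for infinitely many $n$, while $E_\nu(F\mid\mathcal I)=0$.
\end{itemize}
Hence no proof of the lemma as printed exists. It must carry a domination hypothesis such as $\sup_k|F_k|\in L^1$. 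With that hypothesis your proof is complete: $G_N\le\sup_k|F_k|+|F|$ is integrable, so Birkhoff applies to $G_N$ and conditional monotone convergence gives $E_\nu(G_N\mid\mathcal I)\downarrow0$.

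Your fallback is also the right repair of the paper. In the proof of Theorem~A, $P_n^{\pm}(x)$ and $Q_n^{\pm}(x)$ are the logarithmic ratios of Lemma~\ref{prodensity}, for $\pi=\Pi$ and $\pi=\Pi\circ\tilde f$ respectively, evaluated at particular positive radii. Write $h_\pi$ for the integrable function $h$ of that lemma. Then $-h_{\Pi}\le P_n^{\pm}\le 0$ and $-h_{\Pi\circ\tilde f}\le Q_n^{\pm}\le 0$. Thus $\sup_n|Q_n^{\pm}-P_n^{\pm}|\le h_\Pi+h_{\Pi\circ\tilde f}\in L^1$. This same domination is what justifies the $L^1$ convergence of $P_n^+$ and $Q_n^+$ asserted there, and it makes the corrected lemma applicable.
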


Finally, we recall a standard fiberwise invariance property of conditional measures.
\begin{lem}
\label{fibinv}
Let $\mu$ be an $\tilde f$-invariant Borel probability measure on $J(\tilde f)$. Let $p:J(\tilde f)\rightarrow \Sigma_m$ be a measurable map such that
\[\sigma \circ p=p\circ \tilde f.\]
Let
\[\mu=\int_{\Sigma_m} \mu_\omega\,d(p_{\ast}\mu)(\omega)\]
be the disintegration of $\mu$ over $p$. Then
\[
\tilde f_{\ast}\mu_\omega=\mu_{\sigma\omega}
\ \text{for }p_{\ast}\mu\text{-a.e. }\omega\in \Sigma_m.\]
\end{lem}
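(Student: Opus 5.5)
The statement is Lemma~\ref{fibinv}: $\tilde f_\ast\mu_\omega=\mu_{\sigma\omega}$ for $p_\ast\mu$-a.e.\ $\omega$. The idea is to show that the family $\{\tilde f_\ast\mu_{\sigma^{-1}\omega}\}$ is a disintegration of $\mu$ over $p$, and then invoke essential uniqueness of disintegrations. Since $\Sigma_m$ is two-sided, $\sigma$ is a homeomorphism of $\Sigma_m$. Moreover $\sigma_\ast(p_\ast\mu)=p_\ast\tilde f_\ast\mu=p_\ast\mu$ by \eqref{commute} and invariance, so $\sigma$ preserves $\nu:=p_\ast\mu$.

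\textbf{Step 1 (support).} For $\nu$-a.e.\ $\omega$, $\mu_\omega$ is supported on $p^{-1}\{\omega\}$. By \eqref{commute}, $\tilde f(p^{-1}\{\omega\})\subset p^{-1}\{\sigma\omega\}$. Hence $\tilde f_\ast\mu_\omega$ is supported on $p^{-1}\{\sigma\omega\}$ for $\nu$-a.e.\ $\omega$. Setting $\lambda_{\omega'}:=\tilde f_\ast\mu_{\sigma^{-1}\omega'}$ and using that $\sigma$ preserves $\nu$, we get that $\lambda_{\omega'}$ is supported on $p^{-1}\{\omega'\}$ for $\nu$-a.e.\ $\omega'$.

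\textbf{Step 2 (measurability and integration).} The map $\omega'\mapsto\lambda_{\omega'}(A)$ is measurable for every Borel $A$, since it equals $\omega\mapsto\mu_\omega(\tilde f^{-1}A)$ composed with $\sigma^{-1}$. Then
\[
\int\lambda_{\omega'}(A)\,d\nu(\omega')=\int\mu_\omega(\tilde f^{-1}A)\,d\nu(\omega)=\mu(\tilde f^{-1}A)=\mu(A),
\]
where the first equality uses $\sigma$-invariance of $\nu$ and the last uses $\tilde f$-invariance of $\mu$. So $\{\lambda_{\omega'}\}$ is a disintegration of $\mu$ over $p$.

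\textbf{Step 3 (uniqueness).} The space $J(\tilde f)$ is compact metric, so it has a countable generating algebra. Disintegrations over a measurable map are then unique $\nu$-a.e., so $\lambda_{\omega'}=\mu_{\omega'}$ for $\nu$-a.e.\ $\omega'$. Substituting $\omega'=\sigma\omega$ and using invariance of $\nu$ once more gives the claim.

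The only delicate point is Step~3, which needs uniqueness in the form that covers a measurable, not necessarily surjective, $p$. If a direct reference is not available, I would check $\int_E\lambda_{\omega'}(A)\,d\nu=\mu(A\cap p^{-1}E)$ for Borel $E\subset\Sigma_m$. This holds because $\tilde f^{-1}(A\cap p^{-1}E)=\tilde f^{-1}A\cap p^{-1}\sigma^{-1}E$, so the same computation as in Step~2 applies. Equality of both families on a countable generating algebra then forces $\lambda_{\omega'}=\mu_{\omega'}$ a.e.
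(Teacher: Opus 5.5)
Your argument is correct. You show that $\omega'\mapsto\tilde f_\ast\mu_{\sigma^{-1}\omega'}$ is concentrated on the right fibers and integrates to $\mu$, including the localized identity $\int_E\lambda_{\omega'}(A)\,d\nu=\mu(A\cap p^{-1}E)$, and essential uniqueness of disintegrations on the compact metric space $J(\tilde f)$ then closes it; the invertibility of the two-sided shift and the $\sigma^{-1}$-invariance of $\nu$ enter exactly where needed. The paper gives no proof of its own and only cites Furstenberg's Proposition~5.9, and your route is essentially the standard argument behind that reference: equivariance of conditional expectations plus uniqueness of the disintegration.
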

For a proof, see, for example, \cite[Proposition~5.9]{Fur}.

\subsection{Distance estimates}
In this section we give some geometric estimates needed in the proof of the main results. We first recall a standard distortion estimate for conformal maps.
\begin{lem}\cite[Lemma 5.1]{FH}
\label{BDP}
Let $S: U\rightarrow S(U)\subset \mathbb R^d$ be a $C^1$ conformal diffeomorphism on an open set $U\subset \mathbb R^d,$ and $X$ a compact subset of $U.$ Let $c>1.$ Then there exists $r_0>0$ such that 
\[c^{-1}|S^{\prime}(x)||x-y|\le |S(x)-S(y)|\le c |S^{\prime}(x)||x-y|\]
for all $x\in X, y\in U$ with $|x-y|\le r_0.$
Here, $S^{\prime}(x)$ denotes the differential at $x,$ and $|S^{\prime}(x)|$ denotes the operator norm of $S^{\prime}(x).$
\end{lem}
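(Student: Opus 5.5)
The plan is a direct first-order Taylor estimate. It combines two facts: conformality makes the derivative a similarity, and the derivative is uniformly continuous on a compact neighbourhood of $X$. For a conformal linear map $S'(x)$ we have $S'(x)=|S'(x)|\,O_x$ with $O_x$ orthogonal, so $|S'(x)v|=|S'(x)|\,|v|$ for every vector $v$. The whole proof is then to show that the error term in $S(y)-S(x)=S'(x)(y-x)+(\text{error})$ is small compared with $|S'(x)|\,|y-x|$, uniformly in $x\in X$.

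The first step sets up the compact neighbourhood. Since $X$ is compact and $U$ is open, there is $\delta>0$ such that the closed $\delta$-neighbourhood $X_\delta=\{z:\operatorname{dist}(z,X)\le\delta\}$ is a compact subset of $U$. Because $S$ is a $C^1$ diffeomorphism, $S'(z)$ is invertible at every $z$, so $z\mapsto |S'(z)|$ is continuous and positive. Hence $m:=\min_{z\in X_\delta}|S'(z)|>0$. Moreover, $S'$ is uniformly continuous on $X_\delta$ in operator norm.

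The second step chooses $r_0$. Set
\[\varepsilon:=m\min\{1-c^{-1},\,c-1\}>0.\]
By uniform continuity, pick $r_0\in(0,\delta]$ such that $\|S'(z)-S'(w)\|\le\varepsilon$ whenever $z,w\in X_\delta$ and $|z-w|\le r_0$. Now take $x\in X$ and $y\in U$ with $|x-y|\le r_0$. The whole segment $[x,y]$ lies in the closed ball $\overline{B}(x,r_0)\subset X_\delta\subset U$. This is the only point needing care, since $y$ is merely assumed to lie in $U$, and the convexity of the ball handles it. We may therefore write
\[S(y)-S(x)=S'(x)(y-x)+\int_0^1\bigl(S'(x+t(y-x))-S'(x)\bigr)(y-x)\,dt .\]
The integral has norm at most $\varepsilon|y-x|$.

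The third step concludes. Using $|S'(x)(y-x)|=|S'(x)|\,|y-x|$ and $\varepsilon\le m(1-c^{-1})\le |S'(x)|(1-c^{-1})$, we get
\[|S(y)-S(x)|\ge(|S'(x)|-\varepsilon)|y-x|\ge c^{-1}|S'(x)|\,|y-x|.\]
Similarly, $\varepsilon\le |S'(x)|(c-1)$ gives
\[|S(y)-S(x)|\le(|S'(x)|+\varepsilon)|y-x|\le c\,|S'(x)|\,|y-x|.\]
I expect no real obstacle. The only delicate points are the uniform lower bound $m>0$, which makes the additive error $\varepsilon$ into a multiplicative one, and keeping the segment $[x,y]$ inside $U$.
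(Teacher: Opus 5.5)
Your proof is correct and complete. Choosing $r_0\le\delta$ keeps the segment $[x,y]$ inside $X_\delta\subset U$, uniform continuity of $S'$ bounds the integral remainder by $\varepsilon|y-x|$, and conformality together with $m=\min_{X_\delta}|S'|>0$ converts this additive error into the factors $c^{\pm1}$. The paper does not prove this lemma itself but quotes it from Feng--Hu, and your first-order Taylor argument is the standard proof of it.
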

For $(f_1,\ldots,f_m)\in({\rm Rat})^m,$ let $\tilde f$ be the associated skew product map.
Let $\mathcal R$ be a $\Pi$-adapted partition on $J(\tilde f)$. 
By condition (C$\Pi$), for any $R\in \mathcal R$ there exist $j=j(R)\in \{1,..., m\}$, a compact set $K_R\subset \widehat{\mathbb C},$ and an open set $U_R\subset \widehat{\mathbb C}$ such that $R\subset [j]\times K_R,$ $K_R\subset U_R,$ and $f_{j}$ is injective on $U_R.$ Let
\[\varphi_R=(f_j|_{U_R})^{-1}\colon f_j(U_R)\to U_R\]
be the corresponding inverse branch.
Since the inverse branch $\varphi_R$ is holomorphic and $f_j(K_R)$ is a compact subset of $f_j(U_R),$ applying Lemma~\ref{BDP} in local coordinates and using the local bi-Lipschitz equivalence between the Euclidean and spherical metrics, we obtain the same estimate with respect to the spherical metric: for any $c>1$ there exists $r_{R}>0$ such that 
\begin{equation}
\label{distRiemann}c^{-1}\|\varphi_R^{\prime}(\zeta_1)\|d_{\widehat{\mathbb C}}(\zeta_1, \zeta_2)\le 
d_{\widehat{\mathbb C}}\left(\varphi_R(\zeta_1),\varphi_R(\zeta_2)\right)\le 
c \|\varphi_R^{\prime}(\zeta_1)\|d_{\widehat{\mathbb C}}(\zeta_1, \zeta_2)\end{equation}
for all $\zeta_1\in f_j(K_R), \zeta_2\in f_j(U_R)$ with $d_{\widehat{\mathbb C}}(\zeta_1, \zeta_2)\le r_R.$ 

For $z\in \widehat{\mathbb C}$ and $r>0$ set
\[B_{\widehat{\mathbb C}}(z, r)=\{w\in \widehat{\mathbb C}\colon d_{\widehat{\mathbb C}}(z, w)<r\}.\]
Since $\mathcal R$ is finite and $f_{j(R)}(K_R)$ is contained in $f_{j(R)}(U_R)$ for each $R\in\mathcal R$, we may choose $r_0>0$ uniformly so that $B_{\widehat{\mathbb C}}(\zeta,r)\subset f_{j(R)}(U_R)$ whenever
$\zeta\in f_{j(R)}(K_R)$, $0<r<r_0$, and $R\in\mathcal R$.
Using this and \eqref{distRiemann} we obtain the following.
\begin{lem}
\label{BDP2}
Let $c>1.$ Then there exists $r_0>0$ such that for any $R\in \mathcal R, \zeta\in f_{j(R)}(K_R),$ and $0< r< r_0,$
\[B_{\widehat{\mathbb C}}(\varphi_R(\zeta), c^{-1}\|\varphi_R^{\prime}(\zeta)\|r)\subset \varphi_R(B_{\widehat{\mathbb C}}(\zeta, r))\subset B_{\widehat{\mathbb C}}(\varphi_R(\zeta), c\|\varphi_R^{\prime}(\zeta)\|r).\]
\end{lem}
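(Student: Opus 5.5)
We need to prove Lemma \ref{BDP2}, the two-sided ball inclusion. The plan is to deduce it from the distortion estimate \eqref{distRiemann} together with the uniform choice of $r_0$ made just before the statement. Since $\mathcal R$ is finite, we take, for the given $c>1$, a radius $r_0\le\min_{R\in\mathcal R} r_R$. We also require that $B_{\widehat{\mathbb C}}(\zeta,r)\subset f_{j(R)}(U_R)$ for $\zeta\in f_{j(R)}(K_R)$ and $0<r<r_0$. This is possible because $f_{j(R)}(K_R)$ is a compact subset of the open set $f_{j(R)}(U_R)$, so it has positive distance to the complement.

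\emph{Right-hand inclusion.} Let $w\in\varphi_R(B_{\widehat{\mathbb C}}(\zeta,r))$, so $w=\varphi_R(\zeta_2)$ with $d_{\widehat{\mathbb C}}(\zeta,\zeta_2)<r<r_0\le r_R$ and $\zeta_2\in f_j(U_R)$. The upper bound in \eqref{distRiemann} gives
\[d_{\widehat{\mathbb C}}(\varphi_R(\zeta),w)\le c\|\varphi_R'(\zeta)\|d_{\widehat{\mathbb C}}(\zeta,\zeta_2)<c\|\varphi_R'(\zeta)\|r.\]
Here $\|\varphi_R'(\zeta)\|>0$ because $\varphi_R$ is injective holomorphic, and this yields the inclusion.

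\emph{Left-hand inclusion.} This is the main point: we must show the image $\varphi_R(B_{\widehat{\mathbb C}}(\zeta,r))$ actually covers a ball, not merely that distances are comparable. We use a connectedness/boundary argument. Let $D=B_{\widehat{\mathbb C}}(\zeta,r)$, and shrink $r_0$ (replacing $r_0$ by something smaller, e.g.\ below half the distance from the compact sets to the complements) so that the closed ball $\overline{B}_{\widehat{\mathbb C}}(\zeta,r)$ still lies in $f_j(U_R)$ and within distance $r_R$ of $\zeta$. Then $\varphi_R$ is a homeomorphism of $f_j(U_R)$ onto the open set $U_R$, so $\varphi_R(D)$ is open and its boundary is $\varphi_R(\partial D)$. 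For $\zeta_2\in\partial D$, $d_{\widehat{\mathbb C}}(\zeta,\zeta_2)=r$ (on the sphere, for $r$ small the metric sphere is the topological boundary of the ball). The lower bound in \eqref{distRiemann} gives
\[d_{\widehat{\mathbb C}}(\varphi_R(\zeta),\varphi_R(\zeta_2))\ge c^{-1}\|\varphi_R'(\zeta)\|r.\]
Hence the ball $B':=B_{\widehat{\mathbb C}}(\varphi_R(\zeta),c^{-1}\|\varphi_R'(\zeta)\|r)$ is disjoint from $\partial\varphi_R(D)$ and contains the point $\varphi_R(\zeta)\in\varphi_R(D)$. For small radius $B'$ is connected, since spherical balls of radius less than $\pi/2$ (or the appropriate diameter bound) are connected. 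So $B'=(B'\cap\varphi_R(D))\cup(B'\setminus\overline{\varphi_R(D)})$ is a splitting into disjoint open sets, the first nonempty, and therefore $B'\subset\varphi_R(D)$.

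The remaining care is to make $r_0$ uniform. We need to ensure $c\|\varphi_R'(\zeta)\|r_0$ is small enough that the balls involved are connected and are genuine balls on the sphere. This follows because $\sup_{R}\sup_{\zeta\in f_j(K_R)}\|\varphi_R'(\zeta)\|<\infty$ by compactness and finiteness of $\mathcal R$. Shrinking $r_0$ accordingly completes the argument.
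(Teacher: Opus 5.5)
Your proof is correct and follows the same route as the paper, which deduces the lemma from \eqref{distRiemann} together with a uniform choice of $r_0$ ensuring $B_{\widehat{\mathbb C}}(\zeta,r)\subset f_{j(R)}(U_R)$. The paper leaves the left-hand inclusion implicit. Your boundary/connectedness argument, after shrinking $r_0$ so that closed balls stay inside $f_{j(R)}(U_R)$ and within distance $r_R$ of $\zeta$, is exactly the step that justifies that $\varphi_R(B_{\widehat{\mathbb C}}(\zeta,r))$ covers the smaller ball.
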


For $x\in J(\tilde{f})$, let
$\mathcal R(x)$ denote the element of $\mathcal R$ containing $x$,
and put
\[\rho(x):=\|\tilde f'(x)\|^{-1}.\]
For $x\in J(\tilde{f})$ and $r>0$, define
\[B^{\Pi\circ\tilde f}(x,r)
 :=\{y\in J(\tilde f)\colon d_{\widehat{\mathbb C}}(\Pi(\tilde f y),\Pi(\tilde f x))<r\}.\]
We now apply Lemma~\ref{BDP2} to the skew product.
\begin{lem}
\label{Ballesti}
Let $c>1$. There exists $r_0>0$ such that for every $x\in J(\tilde f)$ and every
$0<r<r_0$,
\[ B^\Pi\bigl(x,c^{-1}\rho(x)r\bigr)\cap \mathcal R(x)
\subset B^{\Pi\circ\tilde f}(x,r)\cap \mathcal R(x)
\subset B^\Pi\bigl(x,c\rho(x)r\bigr)\cap \mathcal R(x).\]
\end{lem}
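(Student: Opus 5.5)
Let $c>1$ be given. I would obtain $r_0$ from Lemma~\ref{BDP2} applied with a slightly smaller constant $c_1\in(1,c)$, and then shrink $r_0$ so that the derivatives match. Fix $x=(\omega,z)\in J(\tilde f)$ and let $R=\mathcal R(x)$, $j=j(R)$. By (C$\Pi$) we have $\omega_1=j$ and $z\in K_R$. Set $\zeta:=f_j(z)=\Pi(\tilde f x)\in f_j(K_R)$, so that $\varphi_R(\zeta)=z$. The chain rule for the inverse branch gives $\|\varphi_R'(\zeta)\|=\|f_j'(z)\|^{-1}=\|\tilde f'(x)\|^{-1}=\rho(x)$. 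This is well defined because $f_j$ is injective on the open set $U_R\ni z$, so $f_j'(z)\neq 0$. With this identification the factor in Lemma~\ref{BDP2} is exactly $\rho(x)$, and I may take $c_1=c$ directly.

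For $y=(\omega',w)\in R$ we again have $\omega'_1=j$ and $w\in K_R\subset U_R$, hence $\Pi(\tilde f y)=f_j(w)$ and $w=\varphi_R(f_j(w))$. The key equivalence is
\[
y\in B^{\Pi\circ\tilde f}(x,r)\cap R \iff f_j(w)\in B_{\widehat{\mathbb C}}(\zeta,r) \iff w\in\varphi_R\bigl(B_{\widehat{\mathbb C}}(\zeta,r)\bigr),
\]
valid for $y\in R$ and $0<r<r_0$. The forward implication of the second step is immediate. For the backward one, if $w=\varphi_R(\xi)$ with $\xi\in B_{\widehat{\mathbb C}}(\zeta,r)\subset f_j(U_R)$, then $f_j(w)=\xi$. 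Here I use that $B_{\widehat{\mathbb C}}(\zeta,r)\subset f_j(U_R)$ for $r<r_0$, as arranged before Lemma~\ref{BDP2}. Since $w\in U_R$, injectivity of $f_j$ on $U_R$ is what makes the first set of the equivalence coincide with $\varphi_R$ of a ball rather than merely a preimage.

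Now I apply Lemma~\ref{BDP2}:
\[
B_{\widehat{\mathbb C}}\bigl(z,c^{-1}\rho(x)r\bigr)\subset\varphi_R\bigl(B_{\widehat{\mathbb C}}(\zeta,r)\bigr)\subset B_{\widehat{\mathbb C}}\bigl(z,c\rho(x)r\bigr).
\]
Intersecting with $R$, and using that $y\in B^\Pi(x,s)$ means $w\in B_{\widehat{\mathbb C}}(z,s)$, yields both inclusions of the lemma.

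The only delicate point is the backward direction of the equivalence. An element $y\in R$ with $\Pi(\tilde f y)$ near $\zeta$ could have $w$ equal to some other $f_j$-preimage of a nearby point. Condition (C$\Pi$) rules this out, because $w\in K_R\subset U_R$ and $f_j$ is injective there. The uniformity of $r_0$ in $x$ comes from the finiteness of $\mathcal R$, which has already been built into Lemma~\ref{BDP2}.
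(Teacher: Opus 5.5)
Your proof is correct and is essentially the paper's argument: you identify $\|\varphi_R'(\zeta)\|=\rho(x)$, show that for $y\in R$ membership in $B^{\Pi\circ\tilde f}(x,r)$ is equivalent to $\Pi y\in\varphi_R\bigl(B_{\widehat{\mathbb C}}(\zeta,r)\bigr)$ because $\Pi y\in U_R$, and then intersect the two inclusions of Lemma~\ref{BDP2} with $R$. One labelling slip: the step that needs $w\in U_R$ (and hence injectivity) is $f_j(w)\in B_{\widehat{\mathbb C}}(\zeta,r)\Rightarrow w\in\varphi_R\bigl(B_{\widehat{\mathbb C}}(\zeta,r)\bigr)$, which you call the forward direction in the body but the ``backward direction'' in your closing paragraph; the reverse implication is trivial since $f_j\circ\varphi_R$ is the identity on $f_j(U_R)$.
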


\begin{proof}
Take $x=(\omega,z)\in J(\tilde f)$, and put $R=\mathcal R(x)$. By condition (C$\Pi$), there exist $j\in \{1,..., m\}$, a compact set $K_R\subset \widehat{\mathbb C},$ and an open set $U_R\subset \widehat{\mathbb C}$ such that $R\subset [j]\times K_R,$ $K_R\subset U_R,$ and $f_{j}$ is injective on $U_R.$ Then
$z=\Pi x\in K_R, \omega_1=j,$
and hence
\[\Pi(\tilde f x)=f_j(z)\in f_j(K_R).\]
Set $\zeta:=\Pi(\tilde f x)=f_j(z).$
Since $\varphi_R(\zeta)=z=\Pi x$, we have
\[ \|\varphi_R'(\zeta)\|=\|f_j'(z)\|^{-1}=\|\tilde f'(x)\|^{-1}=\rho(x).\]
By this and Lemma~\ref{BDP2}, there exists $r_0>0$ such that for every
$0<r<r_0$, we have
\begin{equation}
\label{hougan2}
B_{\widehat{\mathbb C}}\bigl(\Pi x,c^{-1}\rho(x)r\bigr)
 \subset
\varphi_R(B_{\widehat{\mathbb C}}(\Pi(\tilde f x),r))
 \subset
B_{\widehat{\mathbb C}}\bigl(\Pi x,c\rho(x)r\bigr).
\end{equation}
We now claim that
\[
\Pi^{-1}\!\left(\varphi_R(B_{\widehat{\mathbb C}}(\Pi(\tilde f x),r))\right)\cap R=B^{\Pi\circ\tilde f}(x,r)\cap R .\]
Indeed, let $y=(\tau,u)\in R$. Then $\tau_1=j$, $u=\Pi y\in U_R$,
and hence
\[\Pi(\tilde f y)=f_j(u).\]
Thus
\[y\in B^{\Pi\circ\tilde f}(x,r)\cap R\ \text{
if and only if}\ 
f_j(u)\in B_{\widehat{\mathbb C}}(\Pi(\tilde f x),r)\ \text{and}\ y\in R.\]
Since $u\in U_R$ and $\varphi_R=(f_j|_{U_R})^{-1}$, this is equivalent to
\[
u=\varphi_R(f_j(u))\in\varphi_R(B_{\widehat{\mathbb C}}(\Pi(\tilde f x),r))\ \text{and}\ y\in R.\]
Equivalently,
\[
y\in\Pi^{-1}\!\left(\varphi_R(B_{\widehat{\mathbb C}}(\Pi(\tilde f x),r))\right)\cap R .\]
This proves the claim.
Combining the claim with \eqref{hougan2} yields
\[
B^\Pi\bigl(x,c^{-1}\rho(x)r\bigr)\cap R \subset B^{\Pi\circ\tilde f}(x,r)\cap R\subset B^\Pi\bigl(x,c\rho(x)r\bigr)\cap R ,\] as required.

\end{proof}

\section{Proofs}
In this section, we prove the remaining propositions and complete the proofs of the main results.
\subsection{Existence of $\Pi$-adapted partition}
The aim of this section is to prove the following.
\begin{pro}\label{generate}
There exists a $\Pi$-adapted partition  of $J(\tilde{f})$.
\end{pro}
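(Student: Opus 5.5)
The plan is to use expansion to show that no point of $\Pi(J(\tilde f)\cap([j]\times\widehat{\mathbb C}))$ is a critical point of $f_j$. Once this is known, the partition can be built from a finite cover by small compact sets on which the relevant generator is injective.

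\emph{Step 1: no critical points.} Fix $j\in\{1,\ldots,m\}$ and a point $x=(\omega,z)\in J(\tilde f)$ with $\omega_1=j$. By Definition~\ref{exprat} with $n=1$,
\[\|f_j'(z)\|=\|\tilde f'(x)\|\ge C\lambda>0,\]
so $z$ is not a critical point of $f_j$. Since $f_j$ is holomorphic, the inverse function theorem then provides an open disc $U_z=B_{\widehat{\mathbb C}}(z,2s_z)$ on which $f_j$ is injective.

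\emph{Step 2: a finite cover.} Set
\[Z_j:=\Pi\bigl(J(\tilde f)\cap([j]\times\widehat{\mathbb C})\bigr).\]
This set is compact, because $[j]$ is clopen and $J(\tilde f)$ is compact. The smaller discs $B_{\widehat{\mathbb C}}(z,s_z)$, $z\in Z_j$, cover $Z_j$, so finitely many of them suffice; call them $V_{j,1},\ldots,V_{j,N_j}$. Let $K_{j,i}:=\overline{V_{j,i}}$ and $U_{j,i}:=U_{z_i}$. Then $K_{j,i}$ is compact, $K_{j,i}\subset U_{j,i}$, and $f_j$ is injective on $U_{j,i}$.

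\emph{Step 3: disjointification.} Define
\[R_{j,i}:=J(\tilde f)\cap\Bigl([j]\times\bigl(K_{j,i}\setminus \textstyle\bigcup_{l<i}K_{j,l}\bigr)\Bigr),\]
and discard the empty ones. Each $R_{j,i}$ is Borel. For fixed $j$ they are pairwise disjoint, and their union is $J(\tilde f)\cap([j]\times Z_j)=J(\tilde f)\cap([j]\times\widehat{\mathbb C})$. The cylinders $[j]$, $j=1,\ldots,m$, partition $\Sigma_m$, so the family $\{R_{j,i}\}$ is a finite Borel partition of $J(\tilde f)$. Each element satisfies $R_{j,i}\subset J(\tilde f)\cap([j]\times K_{j,i})$ with $K_{j,i}\subset U_{j,i}$ and $f_j$ injective on $U_{j,i}$, which is exactly condition (C$\Pi$).

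The only substantive point is Step 1, namely that expansion excludes critical points on the relevant part of the Julia set, and it follows directly from the $n=1$ case of the definition. Everything after that is compactness bookkeeping.
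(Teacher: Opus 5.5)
Your proof is correct and takes essentially the same approach as the paper. The paper also uses expansion to rule out critical points of $f_j$ on $\Pi\bigl(J(\tilde f)\cap([j]\times\widehat{\mathbb C})\bigr)$, then compactness to get finitely many neighbourhoods whose closures lie in injectivity domains of $f_j$, then a disjointification. The only cosmetic difference is that you disjointify the closed discs $K_{j,i}$ directly, whereas the paper disjointifies the open sets $V_{j,\ell}$ intersected with $J_j$ and takes the closure of each resulting piece as the compact set.
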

\begin{proof}
For $j=1,\ldots,m$, define
\[J_j:=\Pi\bigl(J(\tilde{f})\cap \bigl([j]\times \widehat{\mathbb C}\bigr)\bigr)\subset J(G).\]
Since $\tilde{f}$ is expanding, we have $f_j'(z)\neq 0$ for any $ z\in J_j , j\in \{1,..., m\}.$
Hence, for each $j\in \{1,..., m\}$ and for every $z\in J_j$, there exist open neighborhoods $V_z$ and $U_z$ of $z$ such that 
\begin{itemize}
\item $f_j|_{U_z}$ is injective. 
\item $\overline{V_z}\subset U_z,$
where $\overline{V}$ is the closure of $V$ with respect to the spherical metric.
\end{itemize}Since $J_j$ is compact, there exist finitely many open sets 
$V_{j,1},\ldots,V_{j,N_j}$ such that
\[J_j\subset \bigcup_{\ell=1}^{N_j}V_{j,\ell},
\ \text{and}\
f_j|_{U_{j,\ell}}\ \text{is injective for each }\ell .\]
For each $j\in \{1,..., m\},$ set
\[E_{j, 1}:=J_j \cap V_{j, 1}, E_{j, \ell}:=J_j \cap \left(\bigcup_{k=1}^{\ell}V_{j,k}\setminus \bigcup_{k=1}^{\ell-1}V_{j,k}\right)\ \text{for all}\ \ell\in \{2,..., N_j\}.\]
  
Then we obtain a finite Borel partition
\[\mathcal E_j=\{E_{j,\ell}:1\leq \ell\leq N_j\}\]
of $J_j$ such that for any $\ell\in \{1,..., N_j\},$
\[
K_{j,\ell}:=\overline{E_{j,\ell}}\subset U_{j,\ell}\]
and $f_j$ is injective on $U_{j,\ell}$. Hence, 
\[\mathcal R:=
\{J(\tilde f)\cap([j]\times E_{j, \ell})\colon j\in \{1,..., m\}, \ell\in \{1,..., N_j\}\}\] 
satisfies (C$\Pi$), which is required.
\end{proof}

\subsection{Proof of Proposition~\ref{noproblem}}
Let $\mathcal R_1$ and $\mathcal R_2$ be $\Pi$-adapted. 
Since 
\[I_\mu(\mathcal R_1\vee \mathcal R_2\mid \mathcal A)=I_\mu(\mathcal R_1\mid \mathcal A)+I_\mu(\mathcal \mathcal R_2\mid \mathcal A(\mathcal R_1)\vee \mathcal A)\] holds for any sub-$\sigma$-algebra $\mathcal A$,
we have 
\begin{align*}
&I_\mu(\mathcal R_1\vee \mathcal R_2\mid p^{-1}\mathcal B({\Sigma_m})\vee\tilde{f}^{-1}\Pi^{-1}\gamma)-I_\mu(\mathcal R_1\vee \mathcal R_2\mid p^{-1}\mathcal B({\Sigma_m})\vee\Pi^{-1}\gamma)\\&=
I_\mu(\mathcal R_1\mid p^{-1}\mathcal B({\Sigma_m})\vee\tilde{f}^{-1}\Pi^{-1}\gamma)+I_\mu(\mathcal R_2\mid p^{-1}\mathcal B({\Sigma_m})\vee\mathcal A(\mathcal R_1)\vee \tilde{f}^{-1}\Pi^{-1}\gamma)\\
&-I_\mu(\mathcal R_1\mid p^{-1}\mathcal B({\Sigma_m})\vee\Pi^{-1}\gamma)-I_\mu(\mathcal R_2\mid p^{-1}\mathcal B({\Sigma_m})\vee\mathcal A(\mathcal R_1)\vee \Pi^{-1}\gamma).
\end{align*}
Since $\mathcal R_1$ is $\Pi$-adapted, Lemma~\ref{simple} gives
\[\mathcal A(\mathcal R_1)\vee\tilde{f}^{-1}\Pi^{-1}\gamma=\mathcal A(\mathcal R_1)\vee \Pi^{-1}\gamma.\]
Hence, we obtain
\begin{align*}
I_\mu(\mathcal R_1\vee \mathcal R_2\mid p^{-1}\mathcal B({\Sigma_m})\vee\tilde{f}^{-1}\Pi^{-1}\gamma)-I_\mu(\mathcal R_1\vee \mathcal R_2\mid p^{-1}\mathcal B({\Sigma_m})\vee\Pi^{-1}\gamma)\\
=
I_\mu(\mathcal R_1\mid p^{-1}\mathcal B({\Sigma_m})\vee\tilde{f}^{-1}\Pi^{-1}\gamma)
-I_\mu(\mathcal R_1\mid p^{-1}\mathcal B({\Sigma_m})\vee\Pi^{-1}\gamma).
\end{align*}
The same argument gives
\begin{align*}
I_\mu(\mathcal R_1\vee \mathcal R_2\mid p^{-1}\mathcal B({\Sigma_m})\vee\tilde{f}^{-1}\Pi^{-1}\gamma)-I_\mu(\mathcal R_1\vee \mathcal R_2\mid p^{-1}\mathcal B({\Sigma_m})\vee\Pi^{-1}\gamma)
\\=I_\mu(\mathcal R_2\mid p^{-1}\mathcal B({\Sigma_m})\vee\tilde{f}^{-1}\Pi^{-1}\gamma)-I_\mu(\mathcal R_2\mid p^{-1}\mathcal B({\Sigma_m})\vee\Pi^{-1}\gamma).
\end{align*}
Thus, we obtain
\begin{align*}&I_\mu(\mathcal R_1\mid p^{-1}\mathcal B({\Sigma_m})\vee\tilde{f}^{-1}\Pi^{-1}\gamma)-I_\mu(\mathcal R_1\mid p^{-1}\mathcal B({\Sigma_m})\vee\Pi^{-1}\gamma)
\\&=I_\mu(\mathcal R_2\mid p^{-1}\mathcal B({\Sigma_m})\vee\tilde{f}^{-1}\Pi^{-1}\gamma)-I_\mu(\mathcal R_2\mid p^{-1}\mathcal B({\Sigma_m})\vee\Pi^{-1}\gamma),
\end{align*}which implies that for $\mu$-a.e. $x\in J(\tilde f),$
\[h_{\Pi, p}(\tilde{f},\mu, x, \mathcal R_1)=h_{\Pi, p}(\tilde{f},\mu, x, \mathcal R_2),\] as required.

\subsection{Proof of Theorem A}

Fix a $\Pi$-adapted partition $\mathcal R$.
By Proposition~\ref{noproblem}, the value of the conditional projection entropy does not depend on this
choice. 
Let $p: J(\tilde f)\rightarrow \Sigma_m$ be a measurable map such that
$\sigma \circ p=p\circ \tilde f.$
For an $\tilde f$-invariant Borel probability measure $\mu$, let 
\[\mu=\int_{\Sigma_m} \mu_{\omega}\ d(p_{\ast}\mu)(\omega)\]
be the disintegration of $\mu$ over $p.$
By Remark~\ref{disint}, it is enough to prove the following.
\begin{thm}

Let $\mu$ be an $\tilde f$-invariant Borel probability measure on
$J(\tilde f)$. Then for $\mu$-a.e. $x\in J(\tilde f),$

\begin{equation} 
\label{eqlocal}\lim_{r\to 0}\frac{\log \mu_{x}(B^\Pi(x,r))}{\log r}=
\frac{h_{\Pi, p}(\tilde f,\mu, x)}{E_{\mu}(\log\|\tilde f'(x)\| \mid \mathcal I)}.
\end{equation}
If $\mu$ is ergodic, then for $\mu$-a.e. $x\in J(\tilde f),$
\[
\lim_{r\to 0}\frac{\log \mu_{x}(B^\Pi(x,r))}{\log r}
=
\frac{h_{\Pi, p}(\tilde f,\mu)}{\displaystyle\int_{J(\tilde f)} \log\|\tilde f'(x)\|\,d\mu(x)}.
\]

\end{thm}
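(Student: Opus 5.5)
We follow the Feng--Hu telescoping scheme. For $x\in J(\tilde f)$, $n\ge1$ and small $r>0$ we rewrite $\log\mu_x(B^\Pi(x,r))$ as a telescoping sum along the orbit $x,\tilde f x,\ldots,\tilde f^n x$. Each term compares the conditional measure of a $\Pi$-ball at $\tilde f^k x$ with that of a $\Pi$-ball of radius larger by the factor $\|\tilde f'(\tilde f^kx)\|$ at $\tilde f^{k+1}x$.

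\textbf{Single-step estimate.} Fix $c>1$ and $r_0$ as in Lemma~\ref{Ballesti}. Write
\[
\frac{\mu_x(B^\Pi(x,r))}{\mu_{\tilde f x}(B^\Pi(\tilde f x,r/\rho(x)))}
=\frac{\mu_x(B^\Pi(x,r))}{\mu_x(B^\Pi(x,r)\cap\mathcal R(x))}\cdot\frac{\mu_x(B^\Pi(x,r)\cap\mathcal R(x))}{\mu_x(B^{\Pi\circ\tilde f}(x,r/\rho(x))\cap\mathcal R(x))}\cdot\frac{\mu_x(B^{\Pi\circ\tilde f}(x,r/\rho(x))\cap\mathcal R(x))}{\mu_x(B^{\Pi\circ\tilde f}(x,r/\rho(x)))}.
\]
Here we have used Lemma~\ref{fibinv}, which gives $\mu_x(B^{\Pi\circ\tilde f}(x,s))=\mu_{\tilde fx}(B^\Pi(\tilde fx,s))$ for $\mu$-a.e.\ $x$ (invariance of the set of good points is used throughout).

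By Lemma~\ref{prodensity} applied with $\pi=\Pi$ and $\xi=\mathcal R$, the first factor tends to $\exp I_\mu(\mathcal R\mid p^{-1}\mathcal B(\Sigma_m)\vee\Pi^{-1}\gamma)(x)$. By the same lemma applied with $\pi=\Pi\circ\tilde f$, whose $\pi^{-1}\gamma=\tilde f^{-1}\Pi^{-1}\gamma$, the third factor tends to $\exp(-I_\mu(\mathcal R\mid p^{-1}\mathcal B(\Sigma_m)\vee\tilde f^{-1}\Pi^{-1}\gamma)(x))$.

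The middle factor is not literally $1$. Lemma~\ref{Ballesti} only sandwiches $B^{\Pi\circ\tilde f}(x,r/\rho(x))\cap\mathcal R(x)$ between $\Pi$-balls of radii $c^{\pm1}r$, provided $r/\rho(x)<r_0$. To absorb the constant we work with $r$ replaced by $c^{\pm1}r$ at each step, so that after $n$ steps the radius is distorted by at most $c^{\pm n}$. This yields an additive error $n\log c$ in the exponent, which disappears after dividing by $n$ and letting $c\downarrow1$.

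\textbf{Iteration.} Choose $r_n:=\rho_n(x)\,r_0'$, where $\rho_n(x):=\prod_{k=0}^{n-1}\rho(\tilde f^kx)=\|(\tilde f^n)'(x)\|^{-1}$ and $r_0'$ is small. Iterating the single-step estimate $n$ times gives
\[
\log\mu_x(B^\Pi(x,c^{\pm n}r_n))
=-\sum_{k=0}^{n-1}g_{r_k^{(n)}}(\tilde f^kx)+\log\mu_{\tilde f^nx}(B^\Pi(\tilde f^nx,r_0'))+O(n\log c),
\]
where $g_r$ is the product of the two density logarithms at radius $r$. The radius used at step $k$ is comparable to $\rho_{n-k}(\tilde f^kx)r_0'$, which tends to $0$ as $n-k\to\infty$ by expansion. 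The sum is therefore of the form $\sum_{j}F_{n-j}(\tilde f^jx)$, with $F_k\to I_\mu(\mathcal R\mid\cdots\vee\tilde f^{-1}\Pi^{-1}\gamma)-I_\mu(\mathcal R\mid\cdots\vee\Pi^{-1}\gamma)$ almost everywhere. The $L^1$ domination needed for Lemma~\ref{Manelem} comes from the integrable maximal functions $h$ of Lemma~\ref{prodensity}, applied to both $\pi=\Pi$ and $\pi=\Pi\circ\tilde f$. Lemma~\ref{Manelem} then yields $\frac1n\sum\to h_{\Pi,p}(\tilde f,\mu,x)$.

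The boundary term $\frac1n\log\mu_{\tilde f^nx}(B^\Pi(\tilde f^nx,r_0'))$ tends to $0$ by Lemma~\ref{prozero}. Birkhoff's theorem gives $-\frac1n\log r_n\to E_\mu(\log\|\tilde f'\|\mid\mathcal I)(x)$, which is $\ge\log\lambda>0$ by Definition~\ref{exprat}.

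\textbf{Passing to all $r$.} For arbitrary $r\to0$, choose $n$ with $r_{n+1}\le r<r_n$. Since $\rho$ is bounded above and below on the compact set $J(\tilde f)$ (the derivative is nonvanishing there), consecutive radii have bounded ratio. Monotonicity of $r\mapsto\mu_x(B^\Pi(x,r))$ then gives the limit in \eqref{eqlocal} after letting $c\downarrow1$ along a countable sequence. In the ergodic case the conditional expectations on $\mathcal I$ are constants, and these constants equal the integrals appearing in the second formula.

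\textbf{Main obstacle.} The main difficulty is making the iteration rigorous. The convergence in Lemma~\ref{prodensity} holds only as $r\to0$ and is not uniform, and at step $k$ the radius depends on both $n$ and $k$. This is exactly the triangular-array setting that Lemma~\ref{Manelem} is designed to handle. We would follow \cite[Section~5]{FH}: define $F_k(y):=g_{\rho_k(y)r_0'}(y)$ and verify that $\sup_k|F_k|\in L^1$ using the function $h$. The only care needed is that the factor $c^{\pm1}$ lives inside the radius rather than outside the measure.
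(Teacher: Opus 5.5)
Your proposal is correct and is essentially the paper's proof. The ingredients match: radii $c^{\pm n}\rho_n(x)r_\ast$, telescoping along the orbit via Lemma~\ref{fibinv}, the sandwich of Lemma~\ref{Ballesti} on $\mathcal R(x)$, Lemma~\ref{prodensity} for $\pi=\Pi$ and $\pi=\Pi\circ\tilde f$ (with the maximal function giving $L^1$ convergence), then Lemma~\ref{Manelem}, Lemma~\ref{prozero} and Birkhoff's theorem. Your interpolation between consecutive radii is a step the paper leaves implicit.

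One imprecision: your iterated display should not be an equality with an ``$O(n\log c)$'' term, because the middle factor is not controlled in size. The sandwich only shows that it is $\ge 1$ for the $c^{+n}$ radii and $\le 1$ for the $c^{-n}$ radii. So you get two one-sided inequalities, with $c$ entering only through the radius. You also need $c<\lambda$ so that $c^{n}\rho_n$ stays below the threshold of Lemma~\ref{Ballesti}. This is exactly how the paper proceeds.
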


\begin{proof}
For any $x\in J(\tilde f)$ and any $n\ge 1$, put 
\[
\rho_0(x):=1, \rho(x):=\|\tilde f'(x)\|^{-1}, \rho_n(x):=\rho(x)\rho(\tilde f x)\cdots \rho(\tilde f^{n-1}x)=\|(\tilde f^n)'(x)\|^{-1}.\]
Since $\tilde f$ is expanding, there exist $C>0$ and $\lambda>1$ such that
$\|(\tilde f^n)'(x)\|\ge C\lambda^n$
for any $x\in J(\tilde f)$ and any $n\ge 1$. 
Hence, for any $x\in J(\tilde f)$ and any $n\ge 1,$
$\rho_n(x)\le C^{-1}\lambda^{-n}.$ 
Fix $c\in(1,\lambda)$. Then
\[M_c:= \sup_{n\ge 0,\ x\in J(\tilde f)} \max\{c^n\rho_n(x),c^{-n}\rho_n(x)\}<\infty .\]
Let $r_0>0$ be the constant given by Lemma~\ref{Ballesti}. 
Choose $0<r_{\ast}<r_0/M_c$. Then for any $n\ge 0$, any $x\in J(\tilde f)$, and both signs $\varepsilon\in\{+1,-1\}$,
$c^{\varepsilon n}\rho_n(x)r_{\ast}<r_0.$
For $\varepsilon\in\{+1,-1\}$, $x\in J(\tilde f)$ and $n\ge 1$, define
\[r_n^\varepsilon(x):=c^{\varepsilon n}\rho_n(x)r_{\ast},\]
\[
H_n^\varepsilon(x):=
\log\frac{\mu_{x}\bigl(B^\Pi(x,r_n^\varepsilon(x))\bigr)}
{\mu_{\tilde f x}\bigl(B^\Pi(\tilde f x,r_{n-1}^\varepsilon(\tilde f x))\bigr)},\]
\[P_n^\varepsilon(x):=
\log\frac{\mu_{x}\bigl(B^\Pi(x,r_n^\varepsilon(x))\cap\mathcal R(x)\bigr)}
{\mu_{x}\bigl(B^\Pi(x,r_n^\varepsilon(x))\bigr)},\]
and
\[Q_n^\varepsilon(x):=
\log\frac{\mu_{x}\bigl(B^{\Pi\circ\tilde f}(x,r_{n-1}^\varepsilon(\tilde f x))\cap\mathcal R(x)\bigr)}
{\mu_{x}\bigl(B^{\Pi\circ\tilde f}( x,r_{n-1}^\varepsilon(\tilde f x))\bigr)}.
\]
We first treat the upper estimate for the local dimension. For this purpose take $\varepsilon=+1$. Applying Lemma~\ref{Ballesti} to $ r=r_{n-1}^+(\tilde f x),$ for any $x\in J(\tilde f)$ and any $n\ge 1$ we have
\[
B^{\Pi\circ\tilde f}(x, r_{n-1}^+(\tilde f x))\cap\mathcal R(x)
\subset
B^\Pi(x,c\rho(x)r_{n-1}^+(\tilde f x))\cap\mathcal R(x)=
B^\Pi(x,r_n^+(x))\cap\mathcal R(x).
\]
By this and Lemma~\ref{fibinv}, for $\mu$-a.e. $x\in J(\tilde f)$ and any $n\ge 1$
\[H_n^+(x)+P_n^+(x)\ge Q_n^+(x).\]
By the definition of $H_n^+$, we have
\[ \log\mu_{x}\bigl(B^\Pi(x,r_n^+(x))\bigr)=
\sum_{j=0}^{n-1}H_{n-j}^+(\tilde f^j x)+\log\mu_{\tilde f^n x}\bigl(B^\Pi(\tilde f^n x,r_{\ast})\bigr).\]
Hence,
\begin{equation}
\label{inethm}
\begin{split}
\frac{1}{n}\log \mu_{x}\bigl(B^\Pi(x,r_n^+(x))\bigr)& \ge \frac{1}{n}\sum_{j=0}^{n-1} \bigl(-P_{n-j}^+(\tilde f^j x)+Q_{n-j}^+(\tilde f^j x)\bigr) \\
&\quad+ \frac{1}{n}\log\mu_{\tilde f^n x}\bigl(B^\Pi(\tilde f^n x,r_{\ast})\bigr).\end{split}
\end{equation}
Applying Lemma~\ref{prodensity} to the measurable maps $\Pi$ and $\Pi\circ\tilde f$, we have
\[P_n^+ \to
-I_\mu(\mathcal R\mid p^{-1}\mathcal B(\Sigma_m)\vee\Pi^{-1}\gamma)\ \text{and}\
Q_n^+ \to
-I_\mu(\mathcal R\mid p^{-1}\mathcal B(\Sigma_m)\vee\tilde f^{-1}\Pi^{-1}\gamma)\ \text{as}\ n\to \infty
\]
almost everywhere and in $L^1$. Thus
\[ -P_n^+ + Q_n^+ \to g\ \text{as}\ n\to \infty\]
almost everywhere and in $L^1$,  where 
\[g:=
I_\mu(\mathcal R\mid p^{-1}\mathcal B(\Sigma_m)\vee\Pi^{-1}\gamma)
-
I_\mu(\mathcal R\mid p^{-1}\mathcal B(\Sigma_m)\vee\tilde f^{-1}\Pi^{-1}\gamma).
\]
Applying Lemma~\ref{Manelem} to the sequence
\[F_n:=-P_n^+ + Q_n^+\]
gives for $\mu$-a.e. $x\in J(\tilde f),$
\begin{align}\label{eqthm1}\lim_{n\to \infty}\frac{1}{n}\sum_{j=0}^{n-1}
\bigl(-P_{n-j}^+(\tilde f^j x)+Q_{n-j}^+(\tilde f^j x)\bigr) =E_{\mu}(g\mid \mathcal I)(x).
\end{align}
Moreover, Lemma~\ref{prozero} gives for $\mu$-a.e. $x\in J(\tilde f)$,
\begin{align}\label{eqthm2}\lim_{n\to \infty}\frac{1}{n}\log\mu_{\tilde f^n x}\bigl(B^\Pi(\tilde f^n x,r_{\ast})\bigr)=0.
\end{align}
Combining \eqref{inethm}, \eqref{eqthm1}, and \eqref{eqthm2} yields for  $\mu$-a.e. $x$,
\begin{align}\label{inethm2}
\liminf_{n\to\infty}
\frac{1}{n}
\log \mu_{x}\bigl(B^\Pi(x,r_n^+(x))\bigr)
\ge E_{\mu}(g\mid \mathcal I)(x).
\end{align}
On the other hand, by Birkhoff's ergodic theorem, for $\mu$-a.e. $x$,
\begin{align}\label{eqthm3}
\frac{1}{n}\log r_n^+(x)=
\log c+
\frac{1}{n}\sum_{k=0}^{n-1}\log\rho(\tilde f^k x)+
\frac{1}{n}\log r_{\ast}\to \log c+ E_{\mu}(\log \rho\mid \mathcal I)(x).
\end{align}
By \eqref{inethm2} and \eqref{eqthm3} for $\mu$-a.e. $x\in J(\tilde f)$,
\[\limsup_{r\to0}\frac{\log\mu_{x}(B^\Pi(x,r))}{\log r}\le \frac{E_{\mu}(-g\mid \mathcal I)(x)}{ E_{\mu}(-\log \rho\mid \mathcal I)(x)-\log c}.
\]
Letting $c\to 1$ gives for $\mu$-a.e. $x\in J(\tilde f)$,
\begin{align}
\label{inelimsup}
\limsup_{r\to0}\frac{\log\mu_{x}(B^\Pi(x,r))}{\log r}\le\frac{h_{\Pi, p}(\tilde{f},\mu, x)}{E_{\mu}(-\log \rho\mid \mathcal I)(x)}.
\end{align}

We next take $\varepsilon=-1$. By Lemma~\ref{Ballesti}, applied with $r=r_{n-1}^-(\tilde f x),$
we have
\[B^\Pi(x,r_n^-(x))\cap\mathcal R(x)=B^\Pi(x,c^{-1}\rho(x)r_{n-1}^-(\tilde f x))\cap\mathcal R(x)\subset B^{\Pi\circ\tilde f}(x,r_{n-1}^-(\tilde f x))\cap\mathcal R(x).
\]
Therefore
\[H_n^-(x)+P_n^-(x)\le Q_n^-(x).\]
Repeating the preceding argument with $\varepsilon = -1$, and using Lemmas~\ref{prodensity}, \ref{prozero}, and ~\ref{Manelem}, we obtain
\[\limsup_{n\to\infty}\frac{1}{n}\log \mu_{x}\bigl(B^\Pi(x,r_n^-(x))\bigr)
\le
-h_{\Pi, p}(\tilde{f},\mu, x).
\]
Moreover, for $\mu$-a.e. $x,$
\[\frac{1}{n}\log r_n^-(x)=-\log c
+\frac{1}{n}\sum_{k=0}^{n-1}\log\rho(\tilde f^k x)+
\frac{1}{n}\log r_{\ast}
\longrightarrow
E_{\mu}(\log \rho\mid \mathcal I)(x)-\log c .\]
Hence, for $\mu$-a.e. $x\in J(\tilde f)$,
\[\liminf_{r\to0}\frac{\log\mu_{x}(B^\Pi(x,r))}{\log r}
\ge
\frac{h_{\Pi, p}(\tilde{f},\mu, x)}{E_{\mu}(-\log \rho\mid \mathcal I)(x)+\log c}.
\]
Letting $c\to 1$ gives for $\mu$-a.e. $x\in J(\tilde f)$,
\begin{align}
\label{ineliminf}
\liminf_{r\to0}\frac{\log\mu_{x}(B^\Pi(x,r))}{\log r}
\ge
\frac{h_{\Pi, p}(\tilde{f},\mu, x)}{E_{\mu}(-\log \rho\mid \mathcal I)(x)}.
\end{align}
Combining \eqref{inelimsup} and \eqref{ineliminf} yields \eqref{eqlocal}.

If $\mu$ is ergodic, then the invariant $\sigma$-algebra $\mathcal I$ is trivial modulo $\mu$.
Hence, for $\mu$-a.e. $x,$
\[
h_{\Pi, p}(\tilde{f},\mu, x)=\int_{J(\tilde f)} h_{\Pi, p}(\tilde{f},\mu, x)\,d\mu=h_{\Pi, p}(\tilde{f},\mu)\] and
\[E_{\mu}(-\log \rho\mid \mathcal I)(x)=\int_{J(\tilde f)}-\log \rho\,d\mu=\int_{J(\tilde f)} \log\|\tilde f^{\prime}(x)\|\,d\mu.\]
Combining this and \eqref{eqlocal} completes the proof.
\end{proof}
\begin{proof}[Proof of Theorem B]
Let $\overline{\omega}\in \Sigma_m$ be a fixed point of $\sigma.$
Define $p:J(\tilde f)\rightarrow \Sigma_m$ by $p(x)=\overline{\omega}$. Then $p^{-1}\mathcal B(\Sigma_m)$ is the
trivial $\sigma$-algebra modulo $\mu$, and the disintegration over $p$
consists of the single measure $\mu$. Hence, Theorem A reduces to
Theorem B.
\end{proof}

\subsection*{Acknowledgments} The author thanks Hiroki Sumi for fruitful discussions and valuable comments.
The author also thanks Takayuki Watanabe for valuable comments and helpful lectures on rational semigroups. This research was supported by the JSPS KAKENHI 25K17282, Grant-in-Aid for Early-Career Scientists.

\end{document}